\DeclareMathOperator{\Vol}{\operatorname{Vol}}
\newcommand{\Z}{\mathbb{Z}}
\newcommand{\R}{\mathbb{R}}
\DeclareMathOperator{\Tr}{\operatorname{Tr}}
\newcommand{\GL}{\mathbf G\mathbf L}
\newcommand{\SL}{\mathbf S\mathbf L}
\DeclareMathOperator{\grad}{grad}
\DeclareMathOperator{\hess}{hess}
\DeclareMathOperator{\id}{id}
\def\abs#1{\lvert#1\rvert}
\def\norm#1{\lVert#1\rVert}
\def\set#1{\left\{#1\right\}}
\def\br#1{\left[#1\right]}
\def\pr#1{\left(#1\right)}
\newcommand{\eg}{{\it e.g. }}
\newcommand{\ie}{{\it i.e. }}
\newcommand{\via}{{\it via }}
\theoremstyle{plain}
\newtheorem{theorem}{Theorem}[section]
\newtheorem{corollary}[theorem]{Corollary}
\newtheorem{lemma}[theorem]{Lemma}
\theoremstyle{definition}
\newtheorem{defn}[theorem]{Definition}
\theoremstyle{definition}
\newtheorem*{defn*}{Definition}
\theoremstyle{remark}
\theoremstyle{remark}
\theoremstyle{remark}
\theoremstyle{remark}
\newtheorem*{ex*}{Example}
\theoremstyle{remark}
\newtheorem*{exs*}{Examples}
\begin{document}

\title[Energy, periodic sets and spherical designs]{Energy minimization, periodic sets and spherical designs}

\author[R.~Coulangeon and A.~Sch\"urmann]{Renaud Coulangeon and Achill Sch\"urmann}

\keywords{energy minimization, universal optimality, periodic sets}

\subjclass[2000]{82B, 52C, 11H}


\address{
Universit\'e de Bordeaux,
Institut de Math\'ematiques,
351, cours de la
\linebreak\indent
Lib\'e\-ration,
33405 Talence cedex, France}
\email{Renaud.Coulangeon@math.u-bordeaux1.fr}

\address{
Institute of Mathematics, 
University of Rostock,
18051 Rostock,
Germany}
\email{achill.schuermann@uni-rostock.de}

\begin{abstract}
We study energy minimization for pair potentials among periodic sets in Euclidean spaces. 
We derive some sufficient conditions under which a point lattice locally minimizes
 the energy associated to a large class of potential functions. 
This allows in particular to prove a local version of Cohn and Kumar's conjecture
that $\mathsf{A}_2$,  $\mathsf{D}_4$, $\mathsf{E}_8$ and the Leech lattice are globally universally optimal,
regarding energy minimization, and among periodic sets of fixed point density.
\end{abstract}

\maketitle

\tableofcontents

\section{Introduction}

The study of point configurations that minimize energy given by some pair potential 
occurs in diverse contexts, such as crystallography, electrostatics or computer graphics. 
There exist numerous numerical approaches to find locally optimal or stable configurations.
However, a mathematical rigorous treatment proving optimality of a point configuration
is quite difficult. 
 
Already in 1897, J.J. Thomson, the inventor (discoverer) of the electron,
came to the conclusion that ``the equations which determine the stability
of such a collection of particles increase so rapidly in
complexity with the number of particles that a general
mathematical investigation is scarcely possible''. 
In some special situations more can be said though.

\subsection{Energy minimizing spherical codes}

One important case that has been studied extensively by physicists are
point configurations (charged particles) on the surface of a sphere. We call such sets \textit{spherical codes} in what follows. We may consider the unit sphere $\mathcal S^{d-1} \subset \R^d$.
Given a  real-valued nonnegative function~$f : (0,4] \to \R$,
we ask in this situation to minimize the $f$-potential energy
\begin{equation}\label{nrg} 
E(f,\mathcal C)=\sum_{x \neq y \in \mathcal C} f(\norm{x-y}^2)
\end{equation}
among point sets $\mathcal C$ on $\mathcal S^{d-1}$ 
of fixed cardinality $|C|$.

One may think of $\mathcal C$ as a set of electrically charged particles in $\R^3$, 
and of the potential $f(r) = r^{1/2}$, in which case definition (\ref{nrg}) coincides with the classical notion of potential energy in physics.
Depending on the function $f$, the solutions may look quite different.
However, for many ``reasonable functions'' $f$, we may get the same solution.
For example, the vertices of a regular simplex (tetrahedron in $\R^3$) 
will be optimal for any continuous and decreasing function~$f$.

In \cite{MR2257398}, Cohn and Kumar introduced the notion of a {\em universally optimal configuration} of points. 
On the unit sphere, they are point configurations~$\mathcal C$ that minimize $E(f,\mathcal C)$ for all \textit{completely monotonic functions},
that is, for all real-valued $\mathcal C^{\infty}$ functions on the interval $I=(0,4]$, 
such that $(-1)^k f^{(k)}(x) \geq 0$ for all $x$ in $I$ and all $k \geq 0$. The class of completely monotonic functions
contains in a sense all the ``reasonable functions'' in the context of energy minimization.
It in particular contains all the inverse power laws $f(r) = r^{-s}$ with $s>0$, which are often studied in physics.

It turns out that there are several fascinating examples of universally optimal spherical codes.
Indeed, Cohn and Kumar were able to derive a  sufficient criterion for universal optimality, via so-called linear programming bounds and they showed that
all spherical configurations with at most~$m$ mutual distances, that form a
spherical~$(2m-1)$-design (see Definition~\ref{design}), are universally optimal.
This shows by example that there are exceptional structures (and infinite families of them) for which a 
\textit{rigorous} mathematical proof of a very broad energy minimization property is possible.

\subsection{Energy minimizing periodic point sets}

Cohn's and Kumar's considerations for spherical point sets can
be extended in several ways. Note first  that definition (\ref{nrg}) makes sense for any finite set in $\R^d$. One further natural extension is to consider infinite sets $\mathcal C$ in Euclidean spaces, possibly unbounded. This however yields difficulties in defining potential energy properly, 
because of possible subtle convergence problems. For \textit{periodic sets} such problems can be avoided. 
We say a discrete set in $\R^d$ is periodic, if it is a disjoint union of finitely many translates of a given full-rank lattice $L \subset \R^d$ (a discrete subgroup of $\R^d$).
In particular, a full-rank-lattice $L \subset \R^d$ itself is a periodic set. 
In general we can write,
$$\Lambda=\bigsqcup_{i=1}^m (t_i+L)$$ 
where $t_1, \dots, t_m$ are some vectors in $\R^d$.
For a potential function $f$, the $f$-energy of~$\Lambda$ is defined as
\begin{equation}\label{pnrg} 
E(f,\Lambda)=\frac{1}{m}\sum_{i=1}^m\sum_{ x \in \Lambda, x \neq t_i} f(\norm{t_i-x}^2).
\end{equation}
It can be shown that, when finite, the right-hand side of (\ref{pnrg}) is equal to 
\begin{equation}\label{pnrgbis} 
\lim_{R \rightarrow +\infty }\frac{1}{\abs{\Lambda_R}}\sum_{ x , y\in \Lambda_R, x \neq y} f(\norm{x-y}^2)
\end{equation}
where $\Lambda_R=\set{ x \in  \Lambda , \norm{x} \leq R}$ (\cite[Lemma 9.1.]{MR2257398}). 
This clarifies the link with the definition of energy for finite configurations of points. 

As in the case of spherical codes, one may ask if there exist \textit{universally optimal periodic sets}, that is, periodic sets that minimize the energy $E(f,\Lambda)$ for all completely monotonic functions $f$. At this point, no such universally optimal periodic set is known. However, exceptional structures as the hexagonal lattice, the root lattice~$\mathsf{E}_8$,
and the $24$-dimensional Leech lattice are conjectured to be examples (see~\cite{MR2257398}).
Recent experiments show (see \cite{cks-2009})
that also the root lattice~$\mathsf{D}_4$ and (somewhat surprisingly) 
the periodic non-lattice set~$\mathsf{D}_9^+$ could be universally optimal.

As a first attempt to prove universal optimality for any of the examples above, it is natural to ask whether universal optimality holds at least \textit{locally}. Before we go further, we recall a few known results for a similar question in the noticeably simpler context of lattices. Indeed, when $\Lambda=L$ is a lattice, and $f(r)=\dfrac{1}{r^s}$ for some $s > \frac{n}{2}$ , then the corresponding energy $E(\dfrac{1}{r^s},L)$ coincides with the Epstein zeta function of $L$ 
$$
E(\dfrac{1}{r^s},L)=\zeta(L,s)=\sum_{0\neq x \in L}\norm{x}^{-2s}
.
$$

Similarly, if $f(r)=e^{-c r}$, the corresponding energy is 
$$
E(e^{-c r},L)=\sum_{0\neq x \in L} e^{-c\norm{x}^2 }=\theta_L(i\frac{c}{\pi}) - 1
,
$$
where $\theta_L$ is the usual theta series of $L$.
Questions of optimality for lattices with respect to their zeta (resp. theta) function have recently been investigated by Sarnak and Str\"ombergsson
in~\cite{MR2221138}, and by the first author in \cite{MR2272103}, in connection with the theory of spherical designs. In particular, one has the following sufficient condition for local optimality among lattices:
\begin{theorem}[\cite{MR2272103}]\label{coul} \hspace*{1cm}\\
Lattices for which all shells are $4$-designs achieve a local mimimum 
(among lattices) of the map $L \mapsto E(e^{-c r},L)$ for big enough~$c$.
\end{theorem}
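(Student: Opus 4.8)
The plan is to realize nearby lattices of the same covolume as geodesics in the space of positive definite quadratic forms and to show that $L$ is a critical point of $L'\mapsto E(e^{-cr},L')$ with positive definite Hessian once $c$ is large. Since uniform scaling strictly decreases this energy, I would work among lattices of fixed covolume and deform $L$ along $L_t=\exp(tY)L$, where $Y$ is a symmetric traceless endomorphism of $\R^n$; these are the geodesics through $L$ for the natural Riemannian structure on the space of covolume-fixed lattices, and they exhaust the local directions modulo isometry. Writing $N_t(x)=\norm{\exp(tY)x}^2=x^{T}\exp(2tY)x$ for $x\in L$, so that $N_0(x)=\norm{x}^2$, $\dot N_0(x)=2\,x^{T}Yx$ and $\ddot N_0(x)=4\,x^{T}Y^{2}x$, the function $E(t)=\sum_{0\neq x\in L}e^{-cN_t(x)}$ has
\begin{equation*}
E'(0)=-2c\sum_{0\neq x\in L}(x^{T}Yx)\,e^{-c\norm{x}^2},\qquad E''(0)=4\sum_{0\neq x\in L}\left(c^{2}(x^{T}Yx)^{2}-c\,x^{T}Y^{2}x\right)e^{-c\norm{x}^2}.
\end{equation*}
Because the $L_t$ are geodesics, $E''(0)$ equals the Riemannian Hessian in the direction $Y$, so criticality together with $E''(0)>0$ for all $Y\neq 0$ will yield a strict local minimum. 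I want to stress that deforming along geodesics, rather than along straight lines in the space of Gram matrices, is essential: the negative second order term $-c\,x^{T}Y^{2}x$ encodes the curvature of the symmetric space and is precisely what makes $E''(0)$ informative, a naive linear perturbation giving a manifestly nonnegative (hence useless) second derivative.

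Next I would group both sums over the shells $S_u=\set{x\in L:\ \norm{x}^2=u}$, and this is where the design hypothesis enters. Since $x^{T}Yx$ is a polynomial of degree $2$ and each shell is in particular a spherical $2$-design, the inner sum $\sum_{x\in S_u}x^{T}Yx$ equals $\abs{S_u}$ times its average over the sphere of radius $\sqrt{u}$, namely $\tfrac{u\abs{S_u}}{n}\operatorname{tr}(Y)=0$; summing over $u$ gives $E'(0)=0$, so $L$ is critical. For the Hessian the relevant polynomials $(x^{T}Yx)^{2}$ and $x^{T}Y^{2}x$ have degrees $4$ and $2$, so using that each $S_u$ is a spherical $4$-design I may replace each shell sum by $\abs{S_u}$ times the corresponding spherical average. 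The standard moment formulas on the sphere of radius $\sqrt{u}$, together with $\operatorname{tr}(Y)=0$, give $\sum_{x\in S_u}(x^{T}Yx)^{2}=\abs{S_u}\tfrac{2u^{2}}{n(n+2)}\operatorname{tr}(Y^{2})$ and $\sum_{x\in S_u}x^{T}Y^{2}x=\abs{S_u}\tfrac{u}{n}\operatorname{tr}(Y^{2})$, whence
\begin{equation*}
E''(0)=4\operatorname{tr}(Y^{2})\sum_{u}\abs{S_u}\,e^{-cu}\,\frac{cu}{n}\left(\frac{2cu}{n+2}-1\right).
\end{equation*}
This is the crux of the argument: the $4$-design property collapses the degree $4$ contribution, turning the Hessian into a single scalar multiple of $\operatorname{tr}(Y^{2})$, which is strictly positive for every $Y\neq 0$.

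It then remains to show that the scalar $\Sigma(c)=\sum_{u}\abs{S_u}e^{-cu}\tfrac{cu}{n}\left(\tfrac{2cu}{n+2}-1\right)$ is positive for $c$ large. Letting $u_1<u_2<\cdots$ be the successive norms of $L$, I would argue that as $c\to+\infty$ the minimal shell dominates: its contribution is $\sim\tfrac{2c^{2}u_1^{2}}{n(n+2)}\abs{S_{u_1}}e^{-cu_1}>0$, the parenthesis $\tfrac{2cu_1}{n+2}-1$ being positive as soon as $c>\tfrac{n+2}{2u_1}$, while every remaining term is bounded by $\operatorname{poly}(c,u)\,e^{-cu}$ with $u\geq u_2>u_1$ and is therefore exponentially smaller. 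Hence $\Sigma(c)>0$ and $E''(0)>0$ for all $Y\neq 0$ once $c$ is large enough, giving the asserted strict local minimum among lattices of fixed covolume. The main obstacle I anticipate is not any single computation but the geometric set-up of the first paragraph, namely choosing geodesic deformations so that the curvature term appears and matching the polynomial degrees to the $2$- and $4$-design conditions; a secondary point requiring care, handled by the tail estimate above via the polynomial growth of $\abs{S_u}$, is making the threshold on $c$ and the positivity of $\Sigma(c)$ uniform over all directions $Y$ so that the conclusion is genuinely a local minimum and not merely directional positivity.
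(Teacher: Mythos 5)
Your proposal is correct and follows essentially the same route as the paper: parametrize determinant-one deformations by $\exp$ of a traceless symmetric matrix (exactly the paper's $Q_0\exp(Q_0^{-1}K)$ coordinates), kill the gradient with the $2$-design property, and collapse the Hessian to a positive multiple of $\Tr(Y^2)$ via the $4$-design moment identities, which are precisely the paper's equations for $\sum_w H[w]^2$ and $\sum_w H^2[w]$ on each shell. The only (cosmetic) difference is the final positivity step: rather than your first-shell-domination asymptotics with a tail estimate, the paper simply observes that every term $cu\bigl(\tfrac{2cu}{d+2}-1\bigr)$ in the shell sum is already positive once $c>\tfrac{d+2}{2\min\Lambda_0}$, giving an explicit uniform threshold.
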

Cohn and Kumar observed in \cite{MR2257398} 
that it is enough to deal with potentials of the type $r \mapsto  e^{-c r}$, $c>0$, 
to recover all completely monotonic potentials. 
So, in view of the above theorem, universal optimality among lattices essentially reduces to a property of theta series.
Nevertheless, to actually infer universal local optimality (among lattices) from Theorem \ref{coul}, one has to be able in addition to remove the restriction to ``big enough~$c$'', and get the result ``for any $c>0$'' instead, which is highly non trivial in general. It turns out to be possible however in the case of $\mathsf{A}_2$,  $\mathsf{D}_4$, $\mathsf{E}_8$ and the Leech lattice, thanks to Sarnak and Str\"ombergsson's result in~\cite{MR2221138} (see~\cite[Theorem 1]{MR2221138}).
 
\bigskip

As in the case of spherical point sets, where the kissing number problem can be seen as a limiting case,
the sphere packing problem (asking for the maximum possible minimum distance of points 
at a fixed point density) is a limiting case of energy minimization of Euclidean point sets. It can be shown that the density of \textit{periodic} packings come arbitrarily close to the optimal density of a sphere packing in a given dimension $d$. Whereas the local optima for the density of \textit{lattice} packings  are well understood through Voronoi's characterization in terms of perfection and eutaxy, the situation for \textit{periodic} packings is comparatively more difficult. It was shown in \cite{schuermann-2010} that if a lattice $L$ is \textit{perfect} and \textit{strongly eutactic} (\ie the minimal vectors form a $2$-design), then $L$ achieves a local maximum for density not only among \textit{lattice} sphere packings but also among all \textit{periodic} sphere packings \cite[Theorem 10]{schuermann-2010}. By a theorem of Venkov, the condition that $L$ be perfect and strongly eutactic is satisfied in particular when the set of minimal vectors forms a $4$-design. Lattices satisfying this property are sometimes called \textit{strongly perfect} in the literature. To summarize, one has
\begin{theorem}[\cite{schuermann-2010}]\hspace*{1cm}\\
Lattices for which the set of minimal vectors forms a $4$-design
achieve a local optimum for the sphere packing problem
among \emph{all} periodic sets.
\end{theorem}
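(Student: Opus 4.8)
The statement is really a packaging of two ingredients already in hand, so I would begin with the reduction. By Venkov's theorem, if the minimal vectors of $L$ form a spherical $4$-design then $L$ is perfect and strongly eutactic; this is exactly the hypothesis of \cite[Theorem 10]{schuermann-2010}, which then gives the conclusion that $L$ is a local maximum for density among all periodic sets. Thus the content to be proved lies entirely in the perfect-and-strongly-eutactic criterion, and the rest of my plan sketches how I would establish that.

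I would set up the configuration space of periodic sets near $L$ by fixing a full-rank sublattice $L_0 \subseteq L$ of index $m$ and writing $L = \bigsqcup_{i=1}^m (t_i + L_0)$; a nearby periodic set is then encoded by a Gram matrix $G$ for $L_0$ together with translations $\tau = (\tau_1, \dots, \tau_m)$, the base point reproducing $L$. Since density is scale invariant, it suffices—after normalizing the covolume of $L_0$ to be constant—to show that the squared minimal distance $N(G, \tau) = \min_{p \neq q \in \Lambda} \norm{p-q}^2$ cannot increase under small perturbations. The minimal vectors of $\Lambda$ split naturally into within-coset differences, which are vectors of $L_0$ insensitive to $\tau$, and between-coset differences $\tau_i - \tau_j + w$ with $w \in L_0$.

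The plan is then to treat the two kinds of perturbation and their interaction. For deformations of the lattice shape (varying $G$ with $\tau$ fixed) the argument is Voronoi's: the first-order change of a minimal vector's squared length is $v^\top H v$, so perfection forces $H = 0$ once all of these vanish, while eutaxy—the identity lying in the positive cone spanned by the $v v^\top$, a condition that strong eutaxy implies—guarantees that keeping the minimal length from dropping cannot decrease the covolume. For translations (varying $\tau$ with $G$, hence the covolume, fixed) one computes the first-order change of $\norm{\tau_i - \tau_j + w}^2$ along $\epsilon = (\epsilon_i)$ to be $2\,v^\top(\epsilon_j - \epsilon_i)$; summing over all minimal vectors and using that the shell emanating from any point is the full centrally symmetric set of minimal vectors of $L$ (which sums to zero), these first-order terms cancel, so no translation can raise the minimum to first order. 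The reason the strong form of eutaxy—the $2$-design condition $\sum_v v v^\top \propto I$—is needed, rather than mere eutaxy, appears in the second-order analysis of the translations and of their coupling with shape deformations: one must show that the Hessian of the packing radius is negative semidefinite modulo the trivial directions (global translations and scalings), and the proportionality of $\sum_v v v^\top$ to the identity is precisely what forces the isotropic second-order contribution of the translations to be non-improving.

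The main obstacle I anticipate is exactly this second-order step for the translation block together with its coupling to the shape perturbations, and the bookkeeping needed to make the conclusion independent of the auxiliary choice of $L_0$ and $m$. Concretely, one has to rule out the delicate scenario in which a translation direction annihilates every first-order length change yet strictly lengthens every minimal vector at second order; showing that the $2$-design property prevents this—equivalently, that it keeps enough minimal vectors pinned at length $\sqrt{N}$—is where the strong eutaxy hypothesis does its real work and is the step I would expect to demand the most care.
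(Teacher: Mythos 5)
Your first paragraph is precisely the paper's own proof: the paper obtains this theorem by combining Venkov's theorem (minimal vectors forming a $4$-design imply perfection and strong eutaxy) with the citation \cite[Theorem 10]{schuermann-2010}, and it does not reprove the latter. Had you stopped there, your proposal would match the paper exactly.

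The sketch you then give of \cite[Theorem 10]{schuermann-2010} itself, however, fails at precisely the step you single out as delicate, because the mechanism you propose for it is false. You claim that strong eutaxy, i.e. $\sum_v vv^{t}\propto\id$, is ``precisely what forces the isotropic second-order contribution of the translations to be non-improving''. It cannot be: under a pure translational perturbation, the squared length of a minimal vector $v$ joining coset $i$ to coset $j$ changes by $2v^{t}(\epsilon_j-\epsilon_i)+\norm{\epsilon_j-\epsilon_i}^2$, and the second-order part is nonnegative regardless of any design condition, i.e. it always points in the \emph{improving} direction for packing. Worse, the ``delicate scenario'' you say strong eutaxy excludes actually occurs for strongly eutactic lattices: take $\Z^2$, which is strongly eutactic but not perfect, written as the four cosets of $L_0=2\Z^2$ with representatives $(0,0),(1,0),(0,1),(1,1)$, and translate these cosets by $(0,0),(0,\delta),(\delta,0),(\delta,\delta)$ respectively. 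Every first-order length change vanishes, every minimal vector of $\Z^2$ lengthens to $\sqrt{1+\delta^2}$, the within-coset differences keep length at least $2$, and the point density is unchanged; so for small $\delta>0$ the packing density strictly increases. Since perfection enters your plan only through the shape variable $H$, which is zero here, the translational block is in your division of labor handled by strong eutaxy alone---and this example shows that is impossible. What excludes such perturbations for \emph{perfect}, strongly eutactic lattices is a genuine coupling between the translational directions and perfection (nontrivial, since translations do not change the Gram matrix at all); extracting that coupling is the actual content of Sch\"urmann's proof, and it is not a Hessian computation. Note also that the packing minimum, being a minimum of smooth functions, is not twice differentiable, so ``the Hessian of the packing radius'' needs reinterpretation, and even granting one, negative semidefiniteness modulo trivial directions would not imply a local maximum.
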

In this respect, strongly perfect lattices are somehow extremely rigid : there is no possibility to improve locally their density within the set of periodic sets.

The aim of this paper is to combine the ideas of~\cite{MR2272103} and~\cite{schuermann-2010} to prove essentially   that lattices satisfying the conditions of Theorem\ref{coul} are locally universally optimal not only among lattices, but indeed among all periodic sets (a precise formulation is given in Section \ref{mr}, Theorem~\ref{main}). Again, this means that the $4$-design property yields a strong rigidity. Our main result (Theorem~\ref{main}) can be in particular applied to the lattices~$\mathsf{A}_2$,~$\mathsf{D}_4$, $\mathsf{E}_8$
and to the Leech lattice (see Theorem~\ref{thm:application}) : this generalizes the result of Sarnak and Strombergsson in~\cite{MR2221138}, and proves a local version of Cohn and Kumar's conjecture~\cite[Conjecture 9.4]{MR2257398}.

\section{Preliminaries.}
 \subsection{A space of parameters.}
The study of local variations of energy first requires a suitable parametrization of the space of periodic sets. From now on, unless otherwise stated, the word "lattice" will stand  for "full-rank Euclidean lattice", \ie for a discrete subgroup of maximal rank in the Euclidean space $\R^d$, equipped with its standard norm $\| u \|=\left(\sum_{i=1}^d u_i^2\right)^{\frac{1}{2}}$. Following \cite{schuermann-2009} we say that $\Lambda\subset \R^d$ is an $m$-periodic set if there exists a lattice $L\subset \R^d$ and vectors $t_1, \dots, t_m$ in $\R^d$ such that 
\begin{equation}\label{per} 
\Lambda=\bigsqcup_{i=1}^m (t_i+L),
\end{equation}
the disjoint union of $m$ translates of $L$ (in other words, we assume that the sets $t_i+L$ are pairwise disjoint, \ie $t_i-t_j \notin L$ for $i\neq j$). We denote by $\mathcal L_m$ the set of $m$-periodic sets in $\R^d$.

We define the \textit{point density} of $\Lambda=\bigsqcup_{i=1}^m t_i+L$ as
\begin{equation}
 p\delta(\Lambda)=\dfrac{m}{\sqrt{\det L}}.
\end{equation} 
This accounts for the number of points per unit volume and is of course independent of the representation of $\Lambda$ as a union of translates of a lattice (we use the terminology \textit{point density} rather than simply \textit{density} to avoid any confusion with the density of the associated sphere packings).

Since most of the quantities we will be considering (\eg energy, packing-density) are invariant under orthogonal transformations and translations, we may identify two $m$-periodic sets which are isometric. In particular,  the $m$-tuple $\left( t_1, \dots, t_{m}\right)$ can be defined up to translation of its components by a common vector.  In what follows, we adopt the notation $\R^{md}_{*}$ to refer to the set of $m$-tuples $\mathbf{u}=\left( u_1, \dots, u_{m}\right)$ of vectors in $\R^d$ subject to the condition 
\begin{equation}\label{diag} 
u_i-u_j \notin \Z^d \text{ for } i \neq j
\end{equation} 
and we denote by $\R^{md}_{*}/\mathsf{T}$ the same set up to translation. For any $\mathbf{u}=\left( u_1, \dots, u_{m}\right) \in \R^{md}_{*}$, we define a \emph{standard periodic set}
\begin{equation}
\Omega_{\mathbf{u}}=\bigsqcup_{i=1}^m (u_i+\Z^d).
\end{equation}
Then, any $m$-periodic set may be written as $A\Omega_{\mathbf{u}}$ for some $A \in \GL_d(\R)$ and $\mathbf{u} \in \R^{md}_{*}$. 
The matrix $A$ in the above expression is determined, up to left multiplication by $O(d)$, 
by the positive definite quadratic form $Q=A^tA$. Note that we use column vectors, and with these settings have
$$\|A x \|^2 = Q [x] \coloneqq x^t Q x .$$
Using the notation $\mathcal S ^d$ for the set of $d\times d$ real symmetric matrices and $\mathcal S ^d_{>0}$ for the cone of positive definite ones, we thus get a parametrization of $O(d)\backslash\mathcal L_m\slash\mathsf{T}$ by $\mathcal S ^d_{>0} \times \R^{md}_{*}\slash\mathsf{T}$:
to $\left( Q,\mathbf{u} \right) \in \mathcal S ^d_{>0}\times \R^{md}_{*}/\mathsf{T}$ one associates
the $m$-periodic set $\Lambda=A\Omega_{\mathbf{u}}$, where $A$ is a square root of $Q$.
In keeping with \cite{schuermann-2009}, the elements of $\mathcal S ^{d,m}_{>0}\coloneqq \mathcal S ^d_{>0}\times \R^{md}_{*}$ are called $m$-\textit{periodic forms}.

Finally, energy comparison between two $m$-periodic sets makes sense only if they are assumed to have the same point density (otherwise, by shrinking/expanding  a given periodic set with a scaling factor, one can achieve arbitrarily small/large energy). One can for instance restrict to $m$-periodic sets of point density $m$, which amounts, in the above parametrization by periodic forms, to consider the space $\mathcal P ^{d,m}_{>0}\coloneqq \mathcal P ^d_{>0}\times \R^{md}_{*}/\mathsf{T}$, where $\mathcal P ^d_{>0}$ stands for the set of positive definite quadratic forms of determinant $1$. 
 
In accordance with formula (\ref{pnrg}) or (\ref{pnrgbis}), computation of energy involves evaluating potential functions over the set of nonzero elements in $$\Lambda-\Lambda \coloneqq  \set{ x-y  \;  : \;  x, y \in \Lambda}.$$
One difficulty is that a given element in $\Lambda-\Lambda$ generally admits several representations as a difference of two elements in $\Lambda$. The situation is somewhat simpler when $\Lambda$ is a lattice, as shown by the following lemma.
 
\begin{lemma}\label{l1} 
Let $\Lambda=\displaystyle \bigsqcup_{i=1}^m t_i+L$ be an $m$-periodic set in $\R^d$. For $x \in \Lambda$, set $$\Lambda_x=\set{y-x  \; : \; y \in \Lambda}.$$ 
Then the following assertions are equivalent
\begin{enumerate}
\item \label{l11} $\Lambda$ is a lattice.
\item \label{l12} $\Lambda-\Lambda=\Lambda$.
\item \label{l13} $\Lambda_x=\Lambda$ for all $x\in \Lambda$. 
\item \label{l14} For any $k$ in $\set{1,\dots,m}$, there is a uniquely defined permutation $\sigma_k$  of $\set{1,\dots,m}$ such that  
$$\forall i \in \set{1,\dots,m} \quad t_{\sigma_k(i)}\equiv t_i-t_k \mod L.$$ 
\end{enumerate}
\end{lemma}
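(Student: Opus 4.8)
I would establish the equivalence by proving the cycle of implications $(1)\Rightarrow(2)\Rightarrow(3)\Rightarrow(4)\Rightarrow(1)$, using the enumeration of the four assertions as in the statement. The guiding principle is that a discrete, full-rank subset of $\R^d$ is a lattice exactly when it contains $0$ and is stable under subtraction; since $\Lambda$ is by construction discrete and spans $\R^d$ (it contains a translate of the full-rank lattice $L$), all the work reduces to turning the three geometric conditions into this single algebraic one. The implication $(1)\Rightarrow(2)$ is then immediate: if $\Lambda$ is a subgroup it contains $0$, so $\Lambda=\Lambda-0\subseteq\Lambda-\Lambda$, while closure under subtraction gives $\Lambda-\Lambda\subseteq\Lambda$.

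For $(2)\Rightarrow(3)$, I would first extract from $\Lambda-\Lambda=\Lambda$ the facts that $0\in\Lambda$ (take any $x\in\Lambda$ and form $x-x$) and that $-x\in\Lambda$ whenever $x\in\Lambda$ (form $0-x$). Fixing $x\in\Lambda$, the inclusion $\Lambda_x=\Lambda-x\subseteq\Lambda-\Lambda=\Lambda$ is clear; for the reverse inclusion, given $z\in\Lambda$ one writes $z=(z+x)-x$ and notes that $z+x=z-(-x)\in\Lambda-\Lambda=\Lambda$, whence $z\in\Lambda-x=\Lambda_x$. For $(3)\Rightarrow(4)$ I would specialize the hypothesis to $x=t_k$, so that $\Lambda-t_k=\bigsqcup_{i=1}^m(t_i-t_k+L)$ coincides with $\Lambda=\bigsqcup_{j=1}^m(t_j+L)$. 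Both are presented as disjoint unions of exactly $m$ cosets of $L$, and the defining condition $t_i-t_j\notin L$ for $i\neq j$ guarantees that the cosets $t_i-t_k+L$ are pairwise distinct (since $(t_i-t_k)-(t_j-t_k)=t_i-t_j\notin L$), and likewise for the $t_j+L$. Matching these two lists of $m$ distinct cosets produces the bijection $\sigma_k$ with $t_{\sigma_k(i)}\equiv t_i-t_k \bmod L$, and its uniqueness follows because the cosets $t_j+L$ are distinct.

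Finally, for $(4)\Rightarrow(1)$ I would reassemble the cosets: since $\sigma_k$ is a permutation, $\Lambda-t_k=\bigsqcup_i(t_{\sigma_k(i)}+L)=\bigsqcup_j(t_j+L)=\Lambda$ for every $k$, so $\Lambda$ is invariant under translation by each $-t_k$, and it is invariant under translation by $L$ by periodicity. Given $a,b\in\Lambda$, write $b=t_j+\ell'$ with $\ell'\in L$; then $a-t_j\in\Lambda-t_j=\Lambda$ and subtracting $\ell'\in L$ keeps us in $\Lambda$, so $a-b\in\Lambda$. Together with $0=t_k-t_k\in\Lambda$, this shows $\Lambda$ is a subgroup, hence a lattice. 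The only genuinely delicate point—the step I expect to be the main obstacle—is the bookkeeping in $(3)\Rightarrow(4)$: one must argue carefully that the two decompositions of $\Lambda-t_k=\Lambda$ into $m$ cosets of $L$ necessarily agree as multisets, and it is precisely here that the disjointness hypothesis \eqref{diag} (equivalently $t_i-t_j\notin L$) is indispensable, both to know the cosets are distinct and to guarantee that $\sigma_k$ is well defined and unique.
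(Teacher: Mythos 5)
Your proof is correct and follows essentially the same route as the paper: both arguments rest on the characterization of lattices as discrete full-rank additive subgroups of $\R^d$ for the equivalence of (\ref{l11}), (\ref{l12}), (\ref{l13}), define $\sigma_k$ in (\ref{l13})~$\Rightarrow$~(\ref{l14}) by locating $t_i-t_k$ in the coset decomposition of $\Lambda_{t_k}=\Lambda$ with injectivity coming from the disjointness condition $t_i-t_j\notin L$, and close the loop by observing that (\ref{l14}) forces all pairwise differences of elements of $\Lambda$ to lie in $\Lambda$. Your write-up merely supplies more detail (the explicit cycle and the subgroup verification) than the paper's terse version.
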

\begin{proof} 
Lattices are characterized as discrete additive subgroups of $\R^d$
and the equivalence of (\ref{l11}), (\ref{l12})  and (\ref{l13}) is derived from that. As for (\ref{l13}) $\Rightarrow$ (\ref{l14}), we have that for fixed $k$, the difference $t_i-t_k$ lies in $\Lambda_{t_k}=\Lambda=\bigsqcup_{j=1}^m t_j+L$, so there exists a uniquely determined index $\sigma_k(i)$ such that $t_i-t_k \in t_{\sigma_k(i)}+L$. Moreover, $\sigma_k(i) =\sigma_k(j)$ if and only if $t_i -t_k \equiv t_j -t_k \mod L$, which means that $t_i-t_j \in L$, whence $i=j$, so $\sigma_k$ is a bijection. Finally, property (\ref{l14}) cleary implies that any pairwise differences of elements in $\Lambda$ are in $\Lambda$, which shows that (\ref{l14}) $\Rightarrow$ (\ref{l11}).
\end{proof}

\subsection{Potentials.}\label{pot} 
As regards the potential functions $f$ to be used, the following assumptions will be made throughout:\medskip

\textbf{Assumption 1.} \textit{$f$ is a completely monotonic function on $(0,\infty)$, \ie real-valued $\mathcal C^{\infty}$ functions on $(0,\infty)$ such that $(-1)^k f^{(k)}(x) \geq 0$ for all $x$ in $(0,\infty)$ and all $k \geq 0$ (in particular, $f$ is nonnegative).}

Such functions will be referred to as CM functions in what follows.\medskip

It will also be useful, although not necessary, to assume that formula (\ref{pnrg}) converges, which is ensured by the following\medskip

\textbf{Assumption 2.} \textit{There exists $\epsilon >0$ such that $f(x) =O(x)^{-\frac{d}{2}-\epsilon}$ as $x$ tends to infinity.}
\medskip

It would be possible, using Bernstein's theorem, to restrict to potentials of the form $f_c(r)=e^{-cr}$ with $c>0$. Indeed, any CM function may be written as
$$f(x)=\int_{0}^{\infty}e^{-cx} d\alpha(c)$$ (Stieljes Integral) for some weakly increasing function $\alpha$ (see \cite{MR0005923}[Theorem 12b, p. 161]).

A case that we consider separately on its own first, is that of inverse power laws $p_s(r)=r^{-s}$ for some $s>0$.
These do not encompass the whole class of CM functions, but they are easier to deal with.

\section{Local Study of Potential Energy.}
\subsection{Local expression for the energy}

The $f$-energy of an $m$-periodic set $\Lambda=A\Omega_{\mathbf{u}}$ depends only on the associated periodic form $(Q,\mathbf{u})$, namely one has
\begin{equation}\label{pnrgter} 
E(f,\Lambda)=E(f,(Q,\mathbf{u}))=\frac{1}{m}\sum_{i=1}^m\sum_{ x \in \Omega_{\mathbf{u}}, x \neq u_i} f(Q\br{u_i-x}).
\end{equation}
We want to expand the $f$-energy in a neighbourhood of a given $m$-periodic set $$\Lambda_0=A_0\Omega_{\mathbf{u}^0}=\sqcup_{i=1}^m t^0_i+L_0,$$ where we set $L_0=A_0 \Z^d$ and  $\mathbf{t}^0= A_0\mathbf{u}^0$ (\ie $t^0_i=Au^0_i$ for $1 \leq i \leq m$). We also assume that $\Lambda_0$ has point density $p\delta(\Lambda_0)=m$ and we let $X_0=(Q_0,\mathbf{u}^0)$ be the corresponding periodic form, with $Q_0=A_0^tA_0$.

The manifold $\mathcal P ^{d,m}_{>0}= \mathcal P ^d_{>0}\times \R^{md}_{*}/\mathsf{T}$ is locally homeomorphic in a neighborhood of $X_0=(Q_0,\mathbf{u}^0)$ to its tangent space  at $X_0$ which is identified with $\mathcal T_{Q_0}\times \R^{md}/\mathsf{T}$ where
$$\mathcal T_{Q_0}= \set{K\in \mathcal S^d \; : \; \Tr(Q_0^{-1}K)=0}.$$ 
The isomorphism is obtained \via the matrix exponential through the map $\left( K ,\mathbf{u}\right) \mapsto \left( Q_0 \exp(Q_0^{-1}K),\mathbf{u}^0+\mathbf{u}\right)$. Note that the tangent space $\mathcal T_{Q_0}\times \R^{md}/\mathsf{T}$ at  $X_0$ comes equipped with its standard $\SL_d(\R)$-invariant scalar product
\begin{equation}\label{sp} 
 \left\langle \left( K ,\mathbf{u}\right)  ,\left( L ,\mathbf{v}\right) \right\rangle_{X_0} \coloneqq \Tr(Q_0^{-1}KQ_0^{-1}L) + \sum_{i=1}^m u_i^tv_i
\end{equation} 
which defines the Riemannian structure of $\mathcal P ^{d,m}_{>0}$.
To study the local variations of the $f$-energy around $X_0$, it is enough to consider the $f$-energy of 
$(Q_0 \exp(Q_0^{-1}H),\mathbf{u}^0+\mathbf{u})$, for small enough $H \in \mathcal T_{Q_0}$ and $\mathbf{u} \in \R^{md}/\mathsf{T}$. It equals
\begin{equation}\label{nrgexp} 
\frac{1}{m}\sum_{i=1}^m\sum_{ \substack{x \in \Omega_{\mathbf{u}^0 + \mathbf{u}}\\ x \neq u^0_i+u_i}} f(Q_0 \exp(Q_0^{-1}K)\br{u^0_i+u_i-x})
\end{equation}
Each term $u^0_i+u_i-x$ in the internal sum may be written as $u^0_i-u^0_j+u_i-u_j+v$ for some $j \in \set{1,\dots,m}$ and some $v \in \Z^d$. Note that the condition $u^0_i-u^0_j+u_i-u_j+v \neq 0$ will be satisfied if and only if $u^0_i-u^0_j+v$ itself is non-zero, provided that the $u_i$ are close enough to $0$ (this will be the case for instance if the $u_i$'s satisfy $\norm{u_i}<\frac{\rho_0}{2}$, where $\rho_0 := \min_{0 \neq x \in \Omega_{\mathbf{u}^0} - \Omega_{\mathbf{u}^0}}\norm{x}$). Consequently, assuming that $\mathbf{u}$ lies in a suitable neighbourhood of $0$, we can rewrite (\ref{nrgexp}) as
\begin{equation}\label{nrg2} 
 \frac{1}{m} \; \sum_{1\leq i,j\leq m} \; \sum_{0\neq w \in u^0_i-u^0_j+\Z^d}  f(Q_0 \exp(Q_0^{-1}K)\br{w+u_i-u_j})
\end{equation}

 In order to get simpler expressions in the calculations to come it is more convenient to change coordinates, that is we rewrite the above expression as
\begin{equation}\label{nrg3} 
E_f(H,\mathbf{t}) \coloneqq\frac{1}{m}  \; \sum_{1\leq i,j\leq m} \; \sum_{0\neq w \in t^0_i-t^0_j+L_0}  f(\exp(H)\br{w+t_i-t_j})
\end{equation}
where $\mathbf{t}=A_0 \mathbf{u}$ (resp. $\mathbf{t}^0=A_0 \mathbf{u}^0$), and $H=\left( A_0^{-1}\right)^t K A_0^{-1}$ is now in $\mathcal T_{\id}=\set{H \in \mathcal S^d \; : \; \Tr(H)=0}$. To see that (\ref{nrg2}) and (\ref{nrg3}) are the same, we use the identity 

$$Q_0 \exp(Q_0^{-1}K)[x]=x^t Q_0 \exp(Q_0^{-1}K) x = x^t A_0^t \exp\left( \left(A_0^{-1}\right)^t K  A_0^{-1}\right) A_0 x=\exp(H)[A_0 x]$$
valid for any $x \in \R^d$. That $H=\left( A_0^{-1}\right)^t K A_0^{-1}$ is symmetric is clear, and $\Tr H=0$ follows from the simple observation that $$\Tr\left( \left( A_0^{-1}\right)^t K A_0^{-1}\right) =\Tr\left(A_0^{-1} \left( A_0^{-1}\right)^t K \right) =\Tr(Q_0^{-1}K).$$
 Note that the scalar product (\ref{sp}) on $\mathcal T_{\id}$, which
 we denote simply by $\left\langle \, ,\right\rangle$ in what follows, takes the form 
\begin{equation}
 \left\langle \left( K ,\mathbf{u}\right)  ,\left( L ,\mathbf{v}\right) \right\rangle = \Tr(KL) + \sum_{i=1}^m u_i^tv_i .
\end{equation}
Note that the definition of~$E_f$ depends on a given representation of~$\Lambda_0$ as a periodic set, that is,
it depends on $A_0$ and~$\mathbf{u}^0$. 
Note also that $E(f,\Lambda) = E_f(0,\mathbf{0})$ with this setting.

The two main ingredients to obtain further simplifications in the above formula are the following :
\begin{enumerate}
 \item use the additive structure of $\Lambda_0$ (if any).
\item use translation invariance of the energy.
\end{enumerate}

These conditions are met in particular when $\Lambda_0$ is a lattice, in which case we obtain the following crucial lemma :

\begin{lemma}
Suppose that $\Lambda_0=\displaystyle \bigsqcup_{i=1}^m (t^0_i+L_0)$ is a lattice. Then 
\begin{equation}\label{latnrg} 
E_f(H,\mathbf{t})
=\frac{1}{m^2}\sum_{0\neq w \in \Lambda_0} \sum_{1\leq i,k \leq m}f\pr{\exp(H)\br{w+t_i-t_{\sigma_k(i)}}}
,
\end{equation}
where ${\sigma_k}$ is the bijection of Lemma~\ref{l1}. 
\end{lemma}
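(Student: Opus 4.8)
The plan is to start from the definition (\ref{nrg3}) and exploit the two features of a lattice emphasized above: its additive structure, encoded in Lemma~\ref{l1}, and the translation invariance of energy. The additive structure lets me eliminate the free index $j$ in the double sum of (\ref{nrg3}) in favour of the permutation $\sigma_k$, thereby tying the coset in which $w$ lives to a single index $k$; translation invariance is then meant to supply the symmetric normalization.

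First I would reindex the sum $\sum_{1\le i,j\le m}$ in (\ref{nrg3}). Fix a pair $(i,j)$. Since $\Lambda_0$ is a lattice, $t^0_i-t^0_j$ lies in $\Lambda_0$, hence in a unique coset $t^0_k+L_0$; equivalently there is a unique $k$ with $t^0_i-t^0_j\equiv t^0_k \pmod{L_0}$. By Lemma~\ref{l1}(\ref{l14}) this gives $t^0_j\equiv t^0_i-t^0_k$, that is $j=\sigma_k(i)$. The assignment $(i,j)\mapsto(i,k)$ is therefore a bijection of $\set{1,\dots,m}^2$: from $(i,k)$ one recovers $j=\sigma_k(i)$, and from $(i,j)$ one recovers $k$ as the coset index of $t^0_i-t^0_j$. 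Under this bijection the inner range $t^0_i-t^0_j+L_0$ becomes $t^0_k+L_0$, and the translation term $t_i-t_j$ becomes $t_i-t_{\sigma_k(i)}$, so that (\ref{nrg3}) is rewritten as
\[
E_f(H,\mathbf t)=\frac1m\sum_{1\le i,k\le m}\ \sum_{0\neq w\in t^0_k+L_0} f\pr{\exp(H)\br{w+t_i-t_{\sigma_k(i)}}}.
\]

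Next I would reassemble the cosets. Because $\Lambda_0=\bigsqcup_{k=1}^m(t^0_k+L_0)$, as $k$ runs over $\set{1,\dots,m}$ and $w$ over $t^0_k+L_0$ the pair $(k,w)$ runs exactly once over $\Lambda_0$, the trivial coset being the only one that meets $0$ (excluded by the condition $0\neq w$). To pass from the internal factor $\frac1m$ to the symmetric $\frac1{m^2}$ of (\ref{latnrg}), I would invoke translation invariance: translating $\Lambda_0$ by a coset representative $t^0_n$ carries $\Lambda_0$ onto itself but permutes its base points according to $\sigma_n$, which at the level of periodic forms amounts to reindexing the tuple $\mathbf t$ along $\sigma_n$ without changing $E_f$. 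Averaging the $m$ resulting identities is the mechanism intended to detach the coset index from $k$ and to produce the unrestricted sum $\sum_{0\neq w\in\Lambda_0}$ with the factor $\frac1{m^2}$.

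The genuinely routine points are the verification that $(i,j)\mapsto(i,k)$ is a bijection and the bookkeeping of the excluded term $w=0$. The step I expect to be the main obstacle is the translation-invariance averaging: one must check that the permutation of base points induced by translating by $t^0_n$ is compatible with the permutations $\sigma_k$ (concretely, that the substitution $i\mapsto\sigma_n(i)$ sends each summand to another summand of the same family), so that the $m$ translated expressions are genuinely equal and their average collapses exactly to the symmetric right-hand side of (\ref{latnrg}).
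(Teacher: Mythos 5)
Your first step is correct and coincides exactly with the paper's: the reindexing $(i,j)\mapsto(i,k)$ via $j=\sigma_k(i)$ turns (\ref{nrg3}) into
\[
E_f(H,\mathbf{t})=\frac{1}{m}\sum_{1\leq i,k\leq m}\;\sum_{0\neq w\in t^0_k+L_0}f\pr{\exp(H)\br{w+t_i-t_{\sigma_k(i)}}},
\]
which is the paper's (\ref{nrg4}). The gap is your second step, and it is not a routine verification left undone: the compatibility you flag as ``the main obstacle'' is genuinely false. The translated tuple $s^0_i=t^0_i-t^0_n$ is again a representation of $\Lambda_0$, and since (\ref{nrg3}) involves only differences, it defines the same function $E_f$; but the permutations that Lemma~\ref{l1} attaches to $\mathbf{s}^0$ are not the $\sigma_k$. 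From $s^0_{\sigma'_k(i)}\equiv s^0_i-s^0_k \bmod L_0$ one gets $t^0_{\sigma'_k(i)}\equiv t^0_i-t^0_k+t^0_n$, \ie $\sigma'_k=\sigma_n^{-1}\circ\sigma_k=\sigma_{\sigma_n(k)}$. Hence the analogue of (\ref{nrg4}) for $\mathbf{s}^0$ pairs the shifted coset $s^0_k+L_0=t^0_{\sigma_n(k)}+L_0$ with the \emph{shifted} permutation $\sigma_{\sigma_n(k)}$, and the change of index $k\mapsto\sigma_n(k)$ shows it is term-by-term identical with (\ref{nrg4}). The other invariance you invoke, $E_f(H,\mathbf{t})=E_f\pr{H,(t_{\sigma_n(1)},\dots,t_{\sigma_n(m)})}$, is true (the $\sigma_k$ commute, since $t^0_{\sigma_k\sigma_l(i)}\equiv t^0_i-t^0_l-t^0_k$ is symmetric in $k,l$), but it too merely permutes the summands of (\ref{nrg4}). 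So your $m$ ``translated identities'' are all the same identity; averaging them returns (\ref{nrg4}), and the coupling between the coset containing $w$ and the permutation $\sigma_k$ is never broken.

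No argument can break it, because (\ref{latnrg}) is false. Take $d=1$, $m=2$, $L_0=2\Z$, $t^0_1=0$, $t^0_2=1$, so that $\Lambda_0=\Z$, $\sigma_1=\id$ and $\sigma_2$ is the transposition; take $H=0$, $f(r)=e^{-r}$, and set $s=t_1-t_2$. Then (\ref{nrg3})--(\ref{nrg4}) give
\[
E_f(0,\mathbf{t})=\sum_{0\neq w\in2\Z}f(w^2)+\frac12\sum_{w\in1+2\Z}\br{f\pr{(w+s)^2}+f\pr{(w-s)^2}},
\]
whereas the right-hand side of (\ref{latnrg}) equals
\[
\frac14\sum_{0\neq w\in\Z}\br{2f(w^2)+f\pr{(w+s)^2}+f\pr{(w-s)^2}}.
\]
Their difference is
\[
\frac14\pr{\sum_{w\,\mathrm{odd}}-\sum_{0\neq w\,\mathrm{even}}}\br{2f(w^2)-f\pr{(w+s)^2}-f\pr{(w-s)^2}},
\]
whose coefficient of $s^2$ is $-\tfrac14\bigl(\sum_{w\,\mathrm{odd}}-\sum_{0\neq w\,\mathrm{even}}\bigr)(4w^2-2)e^{-w^2}$; the two sums are $\approx 1.48$ and $\approx 0.51$ (dominated by $w=\pm1$, resp.\ $w=\pm2$), so the difference is nonzero. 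The two sides of (\ref{latnrg}) agree at $\mathbf{t}=\mathbf{0}$ for every $H$, and to first order, but they differ at second order in $\mathbf{t}$ --- precisely the order needed for the Hessian. Be aware, finally, that the paper's own proof commits exactly the error you were worried about: its equation $\mathbf{(E_j)}$ shifts the cosets to $t^0_k-t^0_j+L_0$ while keeping the permutations $\sigma_k$ unchanged, which is the false compatibility described above. So your proposal faithfully reproduces the published argument, including its flaw; your instinct about where it could break was correct, and the honest conclusion is that it does break there, because the statement itself is incorrect.
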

\begin{proof}
Since $\Lambda_0 - \Lambda_0=\Lambda_0$, any coset $t^0_i-t^0_j+L_0$ in the internal sum (\ref{nrg3}) can be written as $t^0_k + L_0$ for a uniquely defined~$k$. 
More precisely, using Lemma~\ref{l1}(\ref{l13}), we obtain

\begin{equation}\label{nrg4} 
E_f(H,\mathbf{t})
=\frac{1}{m}\sum_{k=1}^m \; \sum_{0\neq w \in t^0_k+L_0 }  \; \sum_{i=1}^m f\pr{\exp(H)\br{w+t_i-t_{\sigma_k(i)}}} 
\end{equation}
where $\sigma_k$ is the permutation defined by the condition that $t^0_{\sigma_k(i)}\equiv t^0_i - t^0_k \mod L_0$ for all $i \in \set{1,\dots,m}$ (see Lemma~\ref{l1}).
Note that the $t_j$ are replaced by $t_{\sigma_k(i)}$ and that the change from index $j$ to $k$ causes a reordering of terms.

Because of the translation invariance of the energy, the energy is not modified if all the components of $\mathbf{t}^0$ are translated by a common vector $\alpha \in \R^d$. In particular, we can choose $\alpha= -t^0_j$ for some $j \in \set{1,\dots, m}$. Applying this to (\ref{nrg4}), we get for any $j \in \set{1,\dots, m}$, the equation
 
\begin{align}
 \tag*{$\mathbf{(E_j)}$}\label{nrg5}\!\!\!
E_f(H,\mathbf{t})  
&=&\frac{1}{m}\sum_{k=1}^m \; \sum_{0\neq w \in t^0_k-t^0_j+L_0 } \; \sum_{i=1}^m f\pr{\exp(H)\br{w+t_i-t_{\sigma_k(i)}}}\\
\notag  &=&\frac{1}{m}\sum_{k=1}^m \; \sum_{0\neq w \in -t^0_{\sigma_k(j)}+L_0 } \; \sum_{i=1}^m f\pr{\exp(H)\br{w+t_i-t_{\sigma_k(i)}}}.
\end{align}
Adding up the \ref{nrg5}s for $j =1, \dots, m$ and then averaging, together with the observation that $$\bigsqcup_{j=1}^m-t^0_{\sigma_k(j)}+L_0 =\bigsqcup_{j=1}^m -t^0_j+L_0=\bigsqcup_{j=1}^m t^0_j+L_0=\Lambda_0,$$ 
we obtain the final expression 
\begin{align*} 
E_f(H,\mathbf{t})  
&=&\frac{1}{m^2}\sum_{k=1}^m \; \sum_{0\neq w \in \Lambda_0} \; \sum_{i=1}^m f\pr{\exp(H)\br{w+t_i-t_{\sigma_k(i)}}}\\
\notag &=&\frac{1}{m^2} \; \sum_{0\neq w \in \Lambda_0} \; \sum_{1\leq i,k \leq m}f\pr{\exp(H)\br{w+t_i-t_{\sigma_k(i)}}}
\end{align*} 
\end{proof}

\subsection{Taylor expansion of the energy}
We compute in this section the Taylor expansion of order $2$ of (\ref{latnrg}), viewed as a function on $\mathcal P ^{d,m}_{>0}$. To that end, we need to compute the gradient and Hessian of $E_f$ at a lattice $\Lambda_0$, respectively at $(0,\mathbf{0})$, and then use the approximation
 \begin{equation*}
E_f(H,\mathbf{t}) 
=
E_f(0,\mathbf{0}) + \langle \grad E_f(0,\mathbf{0}), \pr{H,\mathbf{t}} \rangle + \frac{1}{2} \hess E_f (0,\mathbf{0}) [H,\mathbf{t}]  \;\; + o (\|\pr{H,\mathbf{t}}\|^2).
\end{equation*} 
The relevant quantities  are given by the following lemma, when $f$ is either an exponential $f_c$ or an inverse power law~$p_s$, which will be the only cases of interest in the sequel.

\begin{lemma}\label{gradhess} Suppose that $\mathbf{t}^0$ is such that $\Lambda_0=\bigsqcup_{i=1}^m t^0_i+L_0$ is a lattice in $\R^d$. Then 
\begin{enumerate}
\item For an inverse power law $p_s(r)=r^{-s}$, one has\medskip
\begin{eqnarray*} 
\langle \grad E_{p_s}(0,\mathbf{0}), \pr{H,\mathbf{t}} \rangle &=&-s\sum_{0\neq w \in \Lambda_0} H\br{w}{\|w\|}^{-2s-2}\\ 
\hess E_{p_s}(0,\mathbf{0})\br{H,\mathbf{t}}&=&
s \sum_{0\neq w \in \Lambda_0} 
{\|w\|}^{-2s-4}\left\lbrace \frac{s+1}{2}(H\br{w})^2-\frac{1}{2}H^2\br{w}{\|w\|^2}\right.\\
&& \left. +\frac{1}{m^2} \sum_{1\leq i,k \leq m} 2(s+1)\pr{w^{t} (t_i-t_{\sigma_k(i)})}^2-{\|w\|^2} \| t_i-t_{\sigma_k(i)} \|^2 \right\rbrace 
\end{eqnarray*}
\item For an exponential law $f_c(r)=e^{-cr}$, one has\medskip
\begin{eqnarray*}
\langle \grad E_{f_c}(0,\mathbf{0}), \pr{H,\mathbf{t}} \rangle &=&-c \sum_{0\neq w \in \Lambda_0} H[w]  e^{-c \|w\|^2}\\  
\hess E_{f_c}(0,\mathbf{0})\br{H,\mathbf{t}}&=&c\sum_{0\neq w \in \Lambda_0} 
  e^{-c \|w\|^2} \left\lbrace \frac{c}{2}  (H[w])^2-\frac{1}{2}H^2\br{w} \right. \\
&&\left. + \frac{1}{m^2}\sum_{1\leq i,k \leq m}2c\left( w^{t}(t_i-t_{\sigma_k(i)})\right)^2  - \|t_i-t_{\sigma_k(i)}\|^2 \right\rbrace 
\end{eqnarray*}
\end{enumerate}
\end{lemma}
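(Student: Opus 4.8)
The plan is to feed the two relevant Taylor expansions into the lattice formula~\eqref{latnrg} and to collect the homogeneous parts of degrees one and two in $(H,\mathbf{t})$. Fix $w\in\Lambda_0\setminus\set{0}$, write $r_0=\norm{w}^2$ and $\tau_{i,k}=t_i-t_{\sigma_k(i)}$, so that the generic summand is $f\pr{\exp(H)\br{w+\tau_{i,k}}}$. First I would expand the inner argument: since $\exp(H)=\id+H+\tfrac12 H^2+O(H^3)$, one has
\begin{equation*}
\exp(H)\br{w+\tau_{i,k}}=r_0+\underbrace{\pr{H\br{w}+2\,w^t\tau_{i,k}}}_{\text{degree }1}+\underbrace{\pr{\tfrac12 H^2\br{w}+2\,w^t H\tau_{i,k}+\norm{\tau_{i,k}}^2}}_{\text{degree }2}+O(3),
\end{equation*}
where both $H$ and each $t_i$ are treated as first-order quantities (the remaining pieces, such as $H\br{\tau_{i,k}}$, are of order at least three). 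Composing with $f(r_0+\epsilon)=f(r_0)+f'(r_0)\epsilon+\tfrac12 f''(r_0)\epsilon^2+O(\epsilon^3)$, the degree-one part of the summand is $f'(r_0)\pr{H\br{w}+2\,w^t\tau_{i,k}}$, and its degree-two part is
\begin{equation*}
f'(r_0)\pr{\tfrac12 H^2\br{w}+2\,w^t H\tau_{i,k}+\norm{\tau_{i,k}}^2}+\tfrac12 f''(r_0)\pr{H\br{w}+2\,w^t\tau_{i,k}}^2.
\end{equation*}

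The crucial simplification comes from summing over $i,k$ and invoking Lemma~\ref{l1}: for each fixed $k$ the map $\sigma_k$ is a permutation of $\set{1,\dots,m}$, so $\sum_{i=1}^m\tau_{i,k}=\sum_i t_i-\sum_i t_{\sigma_k(i)}=0$. Hence every contribution that is linear in $\tau_{i,k}$ vanishes after summation: this removes the $\mathbf{t}$-component of the gradient (leaving only $\sum_w f'(r_0)H\br{w}$) and kills the mixed terms $w^t H\tau_{i,k}$ and $H\br{w}\,(w^t\tau_{i,k})$ at second order. After dividing by $m^2$ and noting that the $i,k$-independent terms each occur $m^2$ times, the surviving degree-two part is
\begin{equation*}
\sum_{0\neq w\in\Lambda_0}\set{\tfrac12 f'(r_0)H^2\br{w}+\tfrac12 f''(r_0)\pr{H\br{w}}^2+\frac{1}{m^2}\sum_{1\leq i,k\leq m}\pr{f'(r_0)\norm{\tau_{i,k}}^2+2f''(r_0)\pr{w^t\tau_{i,k}}^2}}.
\end{equation*}
Substituting $p_s'(r_0)=-s\norm{w}^{-2s-2}$, $p_s''(r_0)=s(s+1)\norm{w}^{-2s-4}$, respectively $f_c'(r_0)=-c\,e^{-c\norm{w}^2}$, $f_c''(r_0)=c^2e^{-c\norm{w}^2}$, and factoring out $s\norm{w}^{-2s-4}$ (resp.\ $c\,e^{-c\norm{w}^2}$) reproduces the two displayed formulas, and the same substitution in $\sum_w f'(r_0)H\br{w}$ gives the two gradient expressions.

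The two expansions and the substitution are routine; the genuinely structural input is the vanishing $\sum_i\tau_{i,k}=0$, which is exactly what forces the gradient to be a pure form in $H$ and removes all terms odd in $\mathbf{t}$ from the second-order part — this is the reason the statement takes such a clean shape. Two further points require care. One must justify differentiating the infinite lattice sum term by term: under Assumption~2 the series for $E_f$ and for its first two formal derivatives converge uniformly on a neighbourhood of $(0,\mathbf{0})$ (the derivatives $f',f''$ decay at least as fast as $f$ — as higher inverse powers for $p_s$, exponentially for $f_c$), so the expansion may legitimately be carried out inside the sum. I would expect the assembly of the second-order term — in particular correctly combining the $f'(r_0)$-times-quadratic contribution with the $\tfrac12 f''(r_0)$-times-(linear)$^2$ contribution, and keeping straight the factor $\tfrac12$ carried by the second-order Taylor term — to be the one place where sign or constant errors are likely to creep in.
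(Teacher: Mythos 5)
Your proposal is correct and follows essentially the same route as the paper's proof: expand $\exp(H)\br{w+\tau_{i,k}}$ to second order in $(H,\mathbf{t})$ (your degree-one and degree-two pieces are exactly the paper's $\mathcal{L}$ and $\mathcal{S}$), compose with the Taylor expansion of $p_s$ resp.\ $f_c$, and use the fact that $\sigma_k$ is a permutation so that $\sum_i (t_i - t_{\sigma_k(i)}) = 0$, killing all terms linear in the translations. Your identification of the Hessian with the degree-two part of the expansion matches the paper's own stated formulas, and your added remark on term-by-term differentiation is a harmless extra point of rigor the paper leaves implicit.
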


\begin{proof} 
Using the Taylor expansion of the matrix exponential we write
\begin{equation*}
\exp(H)[w+t_i-t_{\sigma_k(i)}]={\|w\|^2}+\mathcal{L}\pr{H,\mathbf{t}}+\mathcal{S}\pr{H,\mathbf{t}}+ o (\|\pr{H,\mathbf{t}}\|^2)
\end{equation*}
where
\begin{equation*}
\mathcal{L}\pr{H,\mathbf{t}}=H[w]+ 2w^{t}(t_i-t_{\sigma_k(i)})
\end{equation*}
and
\begin{equation*}
\mathcal{S}\pr{H,\mathbf{t}}=\|t_i-t_{\sigma_k(i)}\|^2+2w^{t}H(t_i-t_{\sigma_k(i)})+\frac{1}{2}H^2\br{w}.
\end{equation*}
Expanding gives
\begin{eqnarray*}
\exp(H)[w+t_i-t_{\sigma_k(i)}]^{-s}  
& = &
{\|w\|}^{-2s}\left(1 + \frac{\mathcal{L}}{{\|w\|^2}} + \frac{\mathcal{S}}{{\|w\|^2}} \right)^{-s}+ o (\|\pr{H,\mathbf{t}}\|^2)  \\
& = & 
{\|w\|}^{-2s}\left(1-s\frac{\mathcal{L}+\mathcal{S}}{{\|w\|^2}} +\frac{s(s+1)}{2}\frac{\mathcal{L}^2}{{\|w\|}^4}\right)+ o (\|\pr{H,\mathbf{t}}\|^2)  
\end{eqnarray*}
in the first case, and
\begin{equation*}
e^{-c\exp(H)[w+t_i-t_{\sigma_k(i)}]}=e^{-c\|w\|^2}\left(1-c\left(\mathcal{L} +\mathcal{S} \right)+\frac{c^2}{2}\mathcal{L}^2\right)+ o (\|\pr{H,\mathbf{t}}\|^2) 
\end{equation*}
in the second one. 
Then, for a fixed $w \in  \Lambda_0$, one has to add the terms
$$\exp(H)[w+t_i-t_{\sigma_k(i)}]^{-s}, 
\qquad \mbox{resp.} \qquad
e^{-c\exp(H)[w+t_i-t_{\sigma_k(i)}]},$$ 
corresponding to all pairs $(i,k)$. 
Because $\sigma_k$ is a permutation, the terms  $2w^{t}(t_i-t_{\sigma_k(i)})$ appearing in $\mathcal L$ add up to zero, as do the terms $2w^{t}H(t_i-t_{\sigma_k(i)})$ in $\mathcal S$, and the terms $2w^{t}(t_i-t_{\sigma_k(i)})H[w]$ that show up in the expansion of $\mathcal L^2$. Altogether, this leads to the formulas of the lemma.
\end{proof}

There are two noticeable features in the previous calculations, whenever our periodic set $\Lambda_0$ actually is a lattice : the gradient of the potential energy at $\Lambda_0$, which is \textit{a priori} a linear form in the variable $(H,\mathbf{t}) \in \mathcal T_{\id} \times \R^{md}$, has a trivial component in the translational direction, and its Hessian splits into the sum of a quadratic form in $H$ and a quadratic form in $\mathbf{t}$. In other words, when studying local perturbations of energy within the set of periodic sets around a lattice, one can consider separatly \textit{purely translational} moves (\ie with $H=0$) and \textit{purely lattice} moves (\ie with $\mathbf{t}=\mathbf{0})$. This observation plays a prominent role in the results of the next section.

\noindent \textit{Remark.} The previous lemma extends partly to more general potential functions. For instance, one can show, using exactly the same argument as in the proof above, that whenever $f$ is a smooth function such that the potential energy $E(f,\Lambda)$ is defined and is a differentiable function on the space of periodic configurations, one has 
$$\langle \grad E_{f}(0,\mathbf{0}), \pr{H,\mathbf{t}} \rangle =\sum_{0\neq w \in \Lambda_0} H[w]  f'(\norm{w}) .$$

\section{Main Result}\label{mr} 
Using the preliminary computations of the previous section, we can
improve the results of~\cite{MR2272103}. We show that under some
rather general conditions, a lattice which is locally optimal
\emph{among lattices} regarding energy minimization, is in fact
locally optimal \emph{among all periodic sets}. One difficulty in giving a precise meaning to "optimal" or "critical point" for the energy, is that a given periodic set admits infinitely many representations of type $\Lambda=\bigsqcup_{i=1}^m t_i+L$, for various $m$'s and $L$'s. To overcome this problem, we adopt the following definition.

\begin{defn} Let $f$ be a CM function.
\begin{enumerate} 
\item A periodic set $\Lambda_0$ is $f$-critical if it is a critical point of~$E(f,\Lambda)$ on $\mathcal P ^{d,m}_{>0}$ for every~$m$.
\item A periodic set $\Lambda_0$ is locally $f$-optimal if it locally minimizes~$E(f,\Lambda)$ on $\mathcal P ^{d,m}_{>0}$ for every~$m$.
\end{enumerate}
\end{defn}

With this terminology, a periodic set is locally universally optimal if it is locally $f$-optimal for any CM~function~$f$, or equivalently, due to Bernstein's theorem, for any exponential potential $f_c$, $c>0$.

We will allow in some instances (\eg Theorem \ref{main} below) the wording \emph{$f$-critical} for a non necessarily CM function $f$. The least we need is that $f$ is smooth and decays sufficiently rapidly so that the potential energy is defined and is a differentiable function on the space of periodic configurations. This is the case in particular if $f$ satisfies \textbf{Assumption 2} of Section~\ref{pot}.

Besides the preliminary computations of the previous sections, the main tool we will use is the notion of \textit{spherical design}.

\begin{defn}\label{design} 
A finite set~$D$ of points on the sphere~$S_r$ of radius~$r$ in~$\R^d$ is a $t$-design if 
\begin{equation}
\dfrac{1}{\Vol (S_r)}\int_{S_r}f(x) \mathrm dx=\dfrac{1}{\abs{D}}\sum_{x \in D}f(x)
\end{equation}
holds for any polynomial $f$ of degree up to $t$.
\end{defn}
The following lemma, the proof of which may be found in  \cite{MR1881618} or \cite{MR2272103}, gives an alternative formulation of the $t$-design property that will be used throughout this section. 
\begin{lemma} [Venkov {\cite[Th\'eor\`eme 3.2.]{MR1881618}}]\label{venkov} 
Let $D$ be a finite subset of the sphere~$S_r$ of radius~$r$ in~$\R^d$ and $t$ an even positive integer. Assume that $D$ is symmetric about $0$, \ie $D=-D$. Then the following properties are equivalent :
\begin{enumerate} 
\item $D$ is a $t$-design.
\item \label{cd} There exists a constant $c_t$, depending only on $r$, $t$ and the cardinalty of $D$, such that $$\forall y \in \R^n , \quad \sum_{x \in D} (x \cdot \alpha) ^t=c_t (y \cdot y) ^{\frac{t}{2}}.$$
\end{enumerate} 
\end{lemma}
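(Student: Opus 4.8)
The plan is to reduce both implications to the classical characterization of spherical designs by harmonic polynomials: a finite set $D\subset S_r$ is a $t$-design if and only if $\sum_{x\in D}h(x)=0$ for every harmonic polynomial $h$ with $1\le\deg h\le t$. I would recall why this holds — restricted to $S_r$ the space of polynomials of degree $\le t$ decomposes as $\bigoplus_{\ell=0}^{t}\Harm_\ell$ (using $\norm{x}^2=r^2$ on $S_r$), and a nonconstant harmonic polynomial has vanishing mean over the sphere, so Definition~\ref{design} is equivalent to the vanishing of the sums over $D$ of all harmonic polynomials of degree between $1$ and $t$.

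The second ingredient is the zonal (harmonic) expansion of the kernel $(x\cdot y)^t$. Since $t$ is even, one can write, for $x,y\in\R^d$,
\[
(x\cdot y)^t=\sum_{k=0}^{t/2} a_k\,\norm{x}^{2k}\norm{y}^{2k}\,Z_{t-2k}(x,y),
\]
where each $Z_\ell(\cdot,y)$ is a harmonic polynomial of degree $\ell$ in $x$ (and, by symmetry, in $y$), the constants $a_k$ are strictly positive, and only the \emph{even} degrees $t,t-2,\dots,0$ occur. I would take this as the classical expansion of a power of an inner product into Gegenbauer/zonal harmonics.

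For the implication $(1)\Rightarrow(2)$, I would fix $y$, sum the expansion over $x\in D$, and use $\norm{x}=r$. For each $k$ with $t-2k\ge1$, the function $x\mapsto Z_{t-2k}(x,y)$ is a harmonic polynomial of positive degree $\le t$, so its sum over the $t$-design $D$ vanishes. Only the $k=t/2$ term survives, giving $\sum_{x\in D}(x\cdot y)^t=c_t\norm{y}^t$ with a constant $c_t$ depending only on $r$, $t$ and $\abs{D}$, as required. For $(2)\Rightarrow(1)$, I would substitute the same expansion into the hypothesis $\sum_{x\in D}(x\cdot y)^t=c_t\norm{y}^t$ and invoke uniqueness of the harmonic decomposition in $y$: since $a_k\neq0$, this forces $\sum_{x\in D}Z_{t-2k}(x,y)=0$ as a polynomial in $y$ for every $k<t/2$. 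Because the zonal harmonics $Z_\ell(\cdot,y)$ span $\Harm_\ell$ as $y$ varies, I obtain $\sum_{x\in D}h(x)=0$ for every harmonic $h$ of even degree with $2\le\deg h\le t$.

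The subtle point — and the place where the hypotheses genuinely interact — is that condition~(2) only ever ``sees'' even-degree harmonics, so by itself it cannot control odd-degree harmonics; this is exactly what the symmetry assumption $D=-D$ supplies for free. Indeed, if $\deg h$ is odd then $h(-x)=-h(x)$, whence $\sum_{x\in D}h(x)=\sum_{x\in D}h(-x)=-\sum_{x\in D}h(x)=0$ automatically. Combining the even-degree vanishing obtained from~(2) with the odd-degree vanishing from the symmetry yields the design property for all degrees $1\le\ell\le t$, closing the equivalence. The two standard facts I would cite rather than prove are the strict positivity of the coefficients $a_k$ in the zonal expansion and the spanning of $\Harm_\ell$ by the zonal harmonics $Z_\ell(\cdot,y)$.
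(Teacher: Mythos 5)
Your argument is correct, but there is nothing in the paper to compare it against: the lemma is stated as a quotation of Venkov's Th\'eor\`eme 3.2, and the authors explicitly defer its proof to the references \cite{MR1881618} and \cite{MR2272103}. Measured against the argument in those sources, yours is essentially the classical one: the characterization of $t$-designs by the vanishing of $\sum_{x\in D}h(x)$ for harmonic $h$ with $1\le\deg h\le t$, combined with the zonal (Gegenbauer) decomposition of the kernel $(x\cdot y)^t$ into harmonic components of even degree with nonzero coefficients. Both directions are sound: for $(1)\Rightarrow(2)$ only the constant component of the kernel survives summation over $D$, and for $(2)\Rightarrow(1)$ uniqueness of the harmonic decomposition in $y$ kills the even-degree harmonic sums, after which the spanning of $\Harm_\ell$ by the zonal harmonics gives the vanishing for even degrees. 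You also correctly isolate the one place where the hypothesis $D=-D$ is genuinely needed: condition (2) is blind to odd-degree harmonics, and antipodal symmetry supplies their vanishing for free; a proof that omitted this point would have a real gap, since (2) alone does not imply the design property for asymmetric sets. The two facts you cite rather than prove (positivity, or at least nonvanishing, of the coefficients $a_k$, and the spanning of $\Harm_\ell$ by $Z_\ell(\cdot,y)$) are standard and legitimately quotable. One cosmetic remark: the statement as printed in the paper contains typos ($\alpha$ where $y$ is meant, and $\R^n$ for $\R^d$); your reading $\sum_{x\in D}(x\cdot y)^t=c_t(y\cdot y)^{t/2}$ is the intended one.
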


Our main result may be stated as follows

\begin{theorem}\label{main}  \quad
\begin{enumerate}
\item\label{un}   Let $\Lambda_0$ be a lattice, all shells of which are $2$-designs. Then, viewed as a periodic set, $\Lambda_0$ is $f$-critical for any CM function $f$, 
or more generally for any smooth function $f$ such that the potential energy $E(f,\Lambda)$ is defined and is a differentiable function on the space of periodic configurations.

\item\label{deux}  Let $\Lambda_0$ be a lattice, all shells of which are $4$-designs. Then, viewed as a periodic set, 
\begin{enumerate}
\item\label{deuxa}  $\Lambda_0$ is locally $p_s$-optimal for any $s>\frac{d}{2}$.
\item\label{deuxb}  $\Lambda_0$ is locally $f_c$-optimal for any big enough $c>0$.
\end{enumerate}
\end{enumerate}

\end{theorem}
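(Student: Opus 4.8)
The plan is to prove all three assertions through a single mechanism: reduce every sum to a sum over the shells of $\Lambda_0$ and exploit the design hypotheses \via Venkov's criterion (Lemma~\ref{venkov}). Throughout I rely on the structural observation recorded after Lemma~\ref{gradhess}: at a lattice $\Lambda_0$ the gradient of the energy has no translational component, and the Hessian splits as an orthogonal sum of a quadratic form in the lattice variable $H$ and a quadratic form in the translational variable $\mathbf t$ (the $H$- and $\mathbf t$-directions being orthogonal for the scalar product~(\ref{sp})). Consequently $f$-criticality amounts to the vanishing of the $H$-linear form, and local $f$-optimality amounts to positive definiteness of the two quadratic pieces separately.

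For assertion (1), I start from the gradient formula of Lemma~\ref{gradhess} (or the general formula in the Remark), which has the shape $\langle\grad E_f(0,\mathbf 0),(H,\mathbf t)\rangle = \sum_{0\neq w\in\Lambda_0} H[w]\,g(\|w\|^2)$ for a scalar weight $g$ and, crucially, no $\mathbf t$-term. I regroup this sum according to the shells $S_r=\{w\in\Lambda_0:\|w\|=r\}$, on each of which $g$ is constant. By Venkov's criterion with $t=2$, the symmetric shell $S_r$ being a $2$-design is equivalent to the moment matrix $\sum_{w\in S_r} w\,w^t=\tfrac{|S_r|r^2}{d}\,I$ being scalar, whence $\sum_{w\in S_r}H[w]=\Tr\big(H\sum_{w\in S_r}w\,w^t\big)=\tfrac{|S_r|r^2}{d}\Tr(H)=0$ because $H\in\mathcal T_{\id}$ is traceless. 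Summing the vanishing shell contributions gives $f$-criticality. Since a symmetric $4$-design is in particular a $2$-design, the same conclusion holds under the hypothesis of assertion (2).

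For the Hessian in assertion (2) I treat the two quadratic pieces shell by shell. For the lattice piece I need the second and fourth moments of a shell: the $2$-design property gives $\sum_{w\in S_r}H^2[w]=\tfrac{|S_r|r^2}{d}\Tr(H^2)$, and the $4$-design property (matching of the fourth-moment tensor to that of the sphere) gives $\sum_{w\in S_r}(H[w])^2=\tfrac{2|S_r|r^4}{d(d+2)}\Tr(H^2)$, the $(\Tr H)^2$ contribution dropping out. Substituting into the bracket of Lemma~\ref{gradhess} collapses each shell to a scalar multiple of $\Tr(H^2)$: for $p_s$ this multiple is proportional to $2s-d$, positive exactly when $s>\tfrac d2$, while for $f_c$ it is proportional to $\tfrac{2cr^2}{d+2}-1$, which is increasing in $r$ and hence positive at every shell once $c>\tfrac{d+2}{2r_{\min}^2}$. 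This recovers the lattice-optimality of Theorem~\ref{coul}. For the translational piece I need only the $2$-design property: for each fixed $v_{ik}=t_i-t_{\sigma_k(i)}$ one has $\sum_{w\in S_r}(w\cdot v_{ik})^2=\tfrac{|S_r|r^2}{d}\|v_{ik}\|^2$, so each shell contributes a nonnegative multiple of $\|v_{ik}\|^2$ (proportional to $2s+2-d$, resp.\ to $\tfrac{2cr^2}{d}-1$) under the same conditions on $s$, resp.\ $c$.

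The crux, and the genuinely new step beyond~\cite{MR2272103}, is to upgrade this nonnegativity of the translational Hessian to positive definiteness on $\R^{md}/\mathsf T$. The translational quadratic form vanishes precisely when $t_i=t_{\sigma_k(i)}$ for all $i$ and all $k$, that is, when $\mathbf t$ is constant on the orbits of every $\sigma_k$. By Lemma~\ref{l1}, the permutation $\sigma_k$ sends the coset of $t^0_i$ to that of $t^0_i-t^0_k$, so the $\sigma_k$ are exactly the translations of the finite abelian group $G=\Lambda_0/L_0$ (of order $m$) acting on itself; together they realize the regular action of $G$, which is simply transitive. Invariance under all $\sigma_k$ therefore forces $t_1=\dots=t_m$, which is precisely the diagonal direction quotiented out in $\R^{md}/\mathsf T$. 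Hence the translational form is positive definite modulo translation. Combining the two positive-definite pieces with the vanishing gradient and the order-$2$ Taylor expansion shows $\Lambda_0$ is a strict local minimum of $E_{p_s}$ for $s>\tfrac d2$ and of $E_{f_c}$ for large $c$, proving (2a) and (2b). I expect the main obstacle to be exactly this kernel analysis: the positivity of each shell contribution is routine once the moment identities are in place, but identifying the kernel of the translational Hessian with the trivial translation requires the group-theoretic transitivity argument.
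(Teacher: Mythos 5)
Your proposal is correct and follows essentially the same route as the paper: the gradient vanishes shell by shell because the $2$-design condition makes each moment matrix $\sum_{w} ww^{t}$ scalar and $H$ is traceless, and the Hessian splits into a lattice part (handled with the $2$- and $4$-design moment identities, giving the factors $2s-d$ and $\tfrac{2c\alpha}{d+2}-1$) plus a translational part (handled with the $2$-design identity, giving $\tfrac{2(s+1)}{d}-1$ and $\tfrac{2c\alpha}{d}-1$), exactly as in the paper. Your kernel analysis via the simply transitive (regular) action of $\Lambda_0/L_0$ is just a group-theoretic rephrasing of the paper's observation that for any pair $(i,j)$ there is a $k$ with $\sigma_k(i)=j$, so the two arguments coincide in substance.
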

\begin{proof}
For any fixed positive integer $m$, we write $\Lambda_0$ as an $m$-periodic set, say $\Lambda_0=\bigsqcup_{i=1}^m t_i^0 + L_0$.
We consider~$E_f$ (as in \eqref{nrg3}) depending on the particular choice of $L_0$ and $\mathbf{t}^0$
to locally study the energy in a neighbourhood of $\Lambda_0$ in $\mathcal P ^{d,m}_{>0}$.
We in particular use the Taylor expansion of $E_f$ around $(0,\mathbf{0})$ obtained in the previous section.

(\ref{un}) One has to show that for any CM function $f$, the gradient of $E_f$ at $(0,\mathbf{0})$ 
is orthogonal to $\mathcal T_{\id} \times \R^{md}$. Thanks to Bernstein's theorem, it is enough to show it for exponential functions $f_c$. For any $\alpha >0$, we set
$$
\Lambda_0(\alpha)= \set{w \in \Lambda_0 \; : \;  {\|w\|^2}=\alpha}
.
$$ 
These shells of the lattice $\Lambda_0$ are assumed to be $2$-designs (if non-empty). Using Lemma \ref{venkov}(\ref{cd}), this is easily seen to be equivalent to the relation
\begin{equation}\label{eut} 
\sum_{w \in \Lambda_0(\alpha)} ww^{t}=\frac{\alpha \abs{\Lambda_0(\alpha)}}{d}\id
\end{equation}
for every positive real number $\alpha$. In other words, the constant $c_2$ in Lemma \ref{venkov}(\ref{cd}) is equal to $\frac{\alpha \abs{\Lambda_0(\alpha)}}{d}$. Observing that $H[w]= \Tr (H w w^{t})$, the expression for the gradient of the energy obtained in Lemma \ref{gradhess} can be reformulated as

\begin{eqnarray*}
 \grad E_{f_c}(0,\mathbf{0})&=&-c \sum_{0\neq w \in \Lambda_0} ww^{t}  e^{-c {\|w\|}^2}\\&=&-c \sum_{\alpha >0}\sum_{w \in \Lambda_0(\alpha)} ww^{t}  e^{-c\alpha}.
\end{eqnarray*} 
Consequently, using (\ref{eut}) we obtain
\begin{equation}
\langle \grad E_{f_c}(0,\mathbf{0}), \pr{H,\mathbf{t}} \rangle  =-c \sum_{\alpha >0}  \frac{\alpha e^{-c\alpha}\abs{\Lambda_0(\alpha)}}{d}\Tr(H) =0
\end{equation}
for any $H \in \mathcal T_{\id}=\set{H \in \mathcal S^d \; : \; \Tr(H)=0}$.

Thanks to the remark following Lemma \ref{gradhess}, the previous computation extends readily to any smooth function $f$ such that the potential energy $E(f,\Lambda)$ is defined and is a differentiable function on the space of periodic configurations, since we then have 
\begin{equation*}
 \grad E_{f}(0,\mathbf{0})=\sum_{\alpha >0}f'(\alpha) \sum_{w \in
   \Lambda_0(\alpha)} ww^{t} 
.
\end{equation*}
Thus again $\langle \grad E_{f}(0,\mathbf{0}), \pr{H,\mathbf{t}} \rangle  = \sum_{\alpha >0}  \frac{f'(\alpha)\abs{\Lambda_0(\alpha)}}{d} \Tr(H) =0$ for any $H \in \mathcal T_{\id}$.

(\ref{deux}) To establish local optimality with respect to a given CM function $f$, it is enough to prove that $\hess E_f (0,\mathbf{0})$ is positive definite.  
By \cite[Proposition 1.2]{MR2272103}, the hypothesis that all shells of $\Lambda_0$ are $4$-designs translates into
\begin{equation}\label{4des} 
\forall H \in S_d (\R), \quad \sum_{0\neq w \in \Lambda_0(\alpha)} H\br{w}^2 =\frac{\alpha^2 \abs{\Lambda_0(\alpha)}}{d(d+2)}((\Tr H)^2 + 2 \Tr (H^2)),
\end{equation}
provided that $\Lambda_0(\alpha)$ is non-empty.
By the definition of a spherical design, it is clear that a $t$-design is automatically a $t'$-design if $t >t'$. Hence, all non-empty shells of $\Lambda_0$ are also 
$2$-designs, which implies in particular that
\begin{equation}\label{2des} 
\forall H \in S_d (\R), \quad \sum_{0\neq w \in \Lambda_0(\alpha)} H^2\br{w}=\frac{\alpha \abs{\Lambda_0(\alpha)}}{d}\Tr (H^2).
\end{equation}

In case $f=p_s$ is an inverse power function, we can plug (\ref{4des}) and (\ref{2des}) into the expression for $\hess E_{p_s} (0,\mathbf{0})$ obtained in Lemma \ref{gradhess}, which yields 
\begin{equation*}
\hess E_{p_s}(0,\mathbf{0})\br{H,\mathbf{t}}= \frac{s(s-\frac{d}{2})}{d(d+2)}\zeta(L_0,s) (\Tr H^2) + \frac{s}{m^2}\Psi_s(\mathbf{t}) 
\end{equation*}
where 
\begin{equation*}
\Psi_s (\mathbf{t}) =\sum_{0\neq w \in \Lambda_0} \left\lbrace \sum_{1\leq i,k \leq m}2(s+1)\pr{w^{t}(t_i-t_{\sigma_k(i)})}^2-{\|w\|^2} \|{t_i-t_{\sigma_k(i)}}\|^2 \right\rbrace {\|w\|}^{-2s-4}.
\end{equation*} 
Unless $H$ is zero, the first term $\frac{s(s-\frac{d}{2})}{d(d+2)}\zeta(L_0,s) \Tr (H^2)$ is positive 
because of the assumption $s > \frac{d}{2}$.
As for $\Psi_s (\mathbf{t})$, we can rewrite it as  
\begin{equation*}
\Psi_s (\mathbf{t}) =\sum_{\alpha >0}\sum_{0\neq w \in \Lambda_0(\alpha)} \left\lbrace \sum_{1\leq i,k \leq m}2(s+1)\pr{w^{t}(t_i-t_{\sigma_k(i)})}^2-\alpha \|{t_i-t_{\sigma_k(i)}}\|^2 \right\rbrace \alpha^{-s-2}
.
\end{equation*}
Since each non-empty shell of $\Lambda_0$ is a $2$-design, this simplifies to
\begin{equation*}
\Psi_s (\mathbf{t}) =\sum_{\alpha >0}\left( \frac{2(s+1)}{d}-1\right) \abs{\Lambda_0(\alpha)} \alpha^{-s-1}  \sum_{1\leq i,k \leq m}\|{t_i-t_{\sigma_k(i)}}\|^2 
\end{equation*}
which is obviously positive for $s > \frac{d}{2}$, unless $t_i=t_{\sigma_k(i)}$ for every $1\leq i,k \leq m$. Given that for any pair $(i,j)$ with $1\leq i,j \leq m$ there exists $k$ such that $\sigma_k(i)=j$ (namely $k=\sigma_j(i)$), this last condition implies that $t_i=t_j$ for all $(i,j)$ and consequently $\mathbf{t}\equiv \mathbf{0} \mod \mathsf{T}$. This proves assertion (\ref{deuxa}). 

If $f=f_c$ is an exponential potential, then the same kind of computation as before yields 
\begin{equation*}
\hess E_{f_c}(0,\mathbf{0})\br{H,\mathbf{t}}= \frac{\Tr (H^2)}{d(d+2)}\sum_{0\neq w \in \Lambda_0} c{\|w\|^2}\left(c{\|w\|^2}-\left( \frac{d}{2}+1\right) \right) e^{-c{\|w\|^2}}+ \frac{c}{m^2}\varUpsilon_c(\mathbf{t}) 
\end{equation*}
where 
\begin{equation*}
\varUpsilon_c(\mathbf{t}) =\sum_{0\neq w \in \Lambda_0} \left\lbrace \sum_{1\leq i,k \leq m}2c\pr{w^{t}(t_i-t_{\sigma_k(i)})}^2-\|{t_i-t_{\sigma_k(i)}}\|^2 \right\rbrace  e^{-c{\|w\|^2}}.
\end{equation*}

If $H\neq 0$, the first term $\frac{\Tr (H^2)}{d(d+2)}\sum_{0\neq w \in \Lambda_0} c{\|w\|^2}\left(c{\|w\|^2}-\left( \frac{d}{2}+1\right) \right) e^{-c{\|w\|^2}}$ is positive as soon as $c$ is strictly greater than $\frac{d+2}{2\min \Lambda_0}$, where we set 
$$
\min \Lambda_0=\min_{0\neq w \in \Lambda_0}{\|w\|^2}
.$$ 
On the other hand, due to the assumption that all non-empty shells of $\Lambda_0$ are $2$-designs, the expression of $\varUpsilon_c(\mathbf{t})$ reduces to
\begin{equation*}
\varUpsilon_c(\mathbf{t}) =\sum_{\alpha >0}\left( \frac{2c\alpha}{d}-1\right) \abs{\Lambda_0(\alpha)} e^{-c\alpha} \sum_{1\leq i,k \leq m}\|{t_i-t_{\sigma_k(i)}}\|^2.
\end{equation*}

This quantity is nonnegative for any $c > \dfrac{d}{2\min \Lambda_0}$, since it is then a sum of nonnegative terms, and it is zero if and only if $t_i=t_{\sigma_k(i)}$ for every $1\leq i,k \leq m$, that is $\mathbf{t}\equiv \mathbf{0} \mod \mathsf{T}$. The conclusion follows.
\end{proof}
 
 As a by-product of the previous proof we obtain that the Hessian of the $f_c$-potential energy splits into a sum
\begin{eqnarray*}
\hess E_{f_c}(0,\mathbf{0})\br{H,\mathbf{t}}&=&\frac{\Tr (H^2)}{d(d+2)}\sum_{0\neq w \in \Lambda_0} c{\|w\|^2}\left(c{\|w\|^2}-\left( \frac{d}{2}+1\right) \right) e^{-c{\|w\|^2}}\\&+&\frac{c}{m^2}\sum_{\alpha >0}\left( \frac{2c\alpha}{d}-1\right) \abs{\Lambda_0(\alpha)} e^{-c\alpha} \sum_{1\leq i,k \leq m}\|{t_i-t_{\sigma_k(i)}}\|^2,
\end{eqnarray*}  
whenever the $4$-design condition is satisfied on each shell of $\Lambda_0$.
Here the first term pertains to \textit{purely lattice} changes, and the second term to \textit{purely translational} ones. Setting $y=\dfrac{c}{\pi}$, we can rewrite it as
\begin{equation}
\hess E_{f_c}(0,\mathbf{0})\br{H,\mathbf{t}}=y\left[ \frac{\Tr (H^2)}{d(d+2)} G(y) +\frac{2\pi }{d m^2}\left( \sum_{1\leq i,k \leq m}\|{t_i-t_{\sigma_k(i)}}\|^2 \right) F(y)\right] 
\end{equation}
with 
\begin{equation}
F(y)=\sum_{\alpha >0}\left(\pi y\alpha-\dfrac{d}{2}\right) \abs{\Lambda_0(\alpha)} e^{-\pi y\alpha} 
\end{equation}
and 
\begin{equation}
G(y)=\sum_{\alpha >0} \pi \alpha\left(\pi y \alpha-\left( \frac{d}{2}+1\right) \right) \abs{\Lambda_0(\alpha)} e^{-\pi y \alpha}
\end{equation}
so that, in particular, 
\begin{equation}\label{deriv} 
G(y)=-\frac{d}{dy}F(y). 
\end{equation}
With these notations, the assertion that the lattice $\Lambda_0$ is  locally universally optimal \emph{among lattices} means that $G(y) > 0$ for all $y>0$. If this is the case, equation (\ref{deriv}) implies that $F$ is strictly decreasing on $\left] 0, + \infty\right)$; But then $F(y)$ is positive for any $y>0$ since, as we already observed, $F(y)$ is positive for any big enough $y$, \eg for $y >\dfrac{d}{2\pi\min \Lambda_0}$.
In other words, we have proved
\begin{corollary}
A lattice with all of its shells being $4$-designs is locally universally optimal \emph{among all periodic sets}, if and only if it is locally universally optimal \emph{among lattices}.
\end{corollary}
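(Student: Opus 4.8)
The plan is to read the equivalence directly off the splitting of $\hess E_{f_c}(0,\mathbf{0})$ established just above, which writes the Hessian as $y$ times a bracket consisting of a \emph{purely lattice} term carrying the factor $G(y)$ and a \emph{purely translational} term carrying the factor $F(y)$. Since each shell of $\Lambda_0$ is in particular a $2$-design, part~(\ref{un}) of Theorem~\ref{main} already guarantees that $\Lambda_0$ is $f_c$-critical for every $c>0$, so by the Taylor expansion local optimality is equivalent to positive definiteness of this Hessian. The whole question therefore collapses to sign conditions on the two scalar functions $F$ and $G$ for all $y>0$. I would stress at the outset that $F$ and $G$ depend only on the shells $\Lambda_0(\alpha)$ and on $y$, and \emph{not} on $m$ nor on the chosen representation $\mathbf{t}^0$; hence a sign statement about $F$ or $G$ is a property of the lattice alone and automatically controls the Hessian in every $m$-periodic representation at once.

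For the forward implication I would simply restrict the Hessian to the subspace $\{\mathbf{t}=\mathbf{0}\}$ of $\mathcal T_{\id}\times\R^{md}/\mathsf{T}$. If the full quadratic form is positive definite for every $c>0$, then so is its restriction, and on that subspace the form is $y\,\frac{\Tr(H^2)}{d(d+2)}G(y)$; positivity for all nonzero $H$ forces $G(y)>0$ for all $y>0$, which is exactly the statement that $\Lambda_0$ is locally universally optimal among lattices.

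The converse is the substantive direction, and its engine is the relation $G(y)=-\frac{d}{dy}F(y)$ recorded in~(\ref{deriv}). Assuming optimality among lattices, i.e.\ $G(y)>0$ for all $y>0$, this relation makes $F$ strictly decreasing on $(0,\infty)$. I would then combine this with the elementary fact, already observed in the proof of Theorem~\ref{main}, that $F(y)>0$ for all sufficiently large $y$ (explicitly for $y>\frac{d}{2\pi\min\Lambda_0}$, since there every summand's factor $\pi y\alpha-\frac{d}{2}$ is positive). A strictly decreasing function that is positive near $+\infty$ must be positive throughout $(0,\infty)$: were $F(y_0)\le 0$ at some finite $y_0$, strict decrease would give $F(y)<F(y_0)\le 0$ for all $y>y_0$, contradicting asymptotic positivity. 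Thus $F(y)>0$ for every $y>0$. With both $G(y)>0$ and $F(y)>0$ in hand, each bracketed term of the Hessian is nonnegative, the first vanishing only when $H=0$ and the second only when $\sum_{1\le i,k\le m}\|t_i-t_{\sigma_k(i)}\|^2=0$, which by the argument in Theorem~\ref{main} means $\mathbf{t}\equiv\mathbf{0}\bmod\mathsf{T}$. The Hessian is therefore positive definite for every $c>0$ and every $m$, and Bernstein's theorem upgrades optimality for all exponentials $f_c$ to optimality for all CM functions, yielding local universal optimality among all periodic sets.

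The only genuine obstacle lives in this converse, and it is entirely concentrated in the monotonicity argument: the content of the corollary is precisely that the single scalar inequality $G>0$ governing lattice moves propagates, through $G=-F'$ together with the asymptotic positivity of $F$, to the inequality $F>0$ governing translational moves. Everything else is bookkeeping already performed in the Hessian computation preceding the statement.
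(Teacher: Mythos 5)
Your proposal is correct and follows essentially the same route as the paper: both read the equivalence off the splitting of $\hess E_{f_c}(0,\mathbf{0})$ into a lattice part weighted by $G(y)$ and a translational part weighted by $F(y)$, and both obtain the substantive direction from $G=-F'$ together with the positivity of $F$ for large $y$, so that $G>0$ forces $F$ strictly decreasing and hence positive on all of $(0,\infty)$. Your explicit treatment of the forward direction (restriction to $\{\mathbf{t}=\mathbf{0}\}$) and your remark that $F$ and $G$ are independent of $m$ and of the representation are only spelled out more fully than in the paper, which treats these points as implicit.
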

This principle applies in particular to $\mathsf{D}_4$, $\mathsf{E}_8$ and the Leech lattice, for which the $4$-design conditions are well-known to hold (there are basically two proofs, one using the properties of the automorphism group, and the other one using theta series with spherical coefficients, see \cite{MR2272103} for details). Altogether, we obtain
\begin{theorem}\label{thm:application}
The root lattices $\mathsf{D}_4$, $\mathsf{E}_8$ and the Leech lattice are locally universally optimal, \ie they locally minimize the $f$-energy on $\mathcal P ^{d,m}_{>0}$ for any $m$ and any completely monotonic potential $f$. 
\end{theorem}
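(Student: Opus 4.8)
The plan is to obtain this directly from the Corollary just proved, which asserts that for a lattice all of whose shells are $4$-designs, local universal optimality among \emph{all} periodic sets is equivalent to local universal optimality among \emph{lattices}. Thus the proof falls into two parts: checking the $4$-design hypothesis for each of $\mathsf{D}_4$, $\mathsf{E}_8$ and the Leech lattice, and then importing their optimality among lattices from known results. I expect the first part to be routine and citable, and the second to carry all the real difficulty.

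First I would record that for $\mathsf{D}_4$, $\mathsf{E}_8$ and the Leech lattice every non-empty shell $\Lambda_0(\alpha)$ is a $4$-design. Rather than reprove this, I would cite the two standard arguments: one exploits the large automorphism group of each lattice, so that each shell carries no invariant harmonic polynomial of degree between $1$ and $4$; the other uses the vanishing, forced by dimension considerations on the relevant spaces of modular forms, of the theta series with harmonic coefficients of degrees $2$ and $4$ (see \cite{MR2272103}). This places us squarely in the hypotheses of Theorem~\ref{main} and of the Corollary.

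With the $4$-design condition established, the Corollary reduces the theorem to the statement that each of these lattices is locally universally optimal \emph{among lattices}, which in the notation above is precisely the inequality $G(y)>0$ for all $y>0$. For completeness I would recall why this single inequality suffices: by \eqref{deriv} it makes $F$ strictly decreasing, and since $F(y)>0$ already for $y>\frac{d}{2\pi\min\Lambda_0}$, it follows that $F(y)>0$ on all of $(0,+\infty)$; hence both the purely-lattice term (governed by $G$) and the purely-translational term (governed by $F$) of the Hessian are positive, giving local $f_c$-optimality for every $c>0$ and, via Bernstein's theorem, local universal optimality for every $m$.

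The crux, and the only genuinely hard step, is therefore $G(y)>0$ for all $y>0$; equivalently, that $\mathsf{D}_4$, $\mathsf{E}_8$ and the Leech lattice are strict local minima, among lattices of fixed determinant, of $L\mapsto E(f_c,L)$ for the \emph{entire} range $c>0$ (equivalently of $L\mapsto\theta_L(iy)$ for all $y>0$). Theorem~\ref{coul} supplies this only for large $c$, and the genuine obstacle is removing that restriction. At this point I would invoke Sarnak and Str\"ombergsson's \cite[Theorem 1]{MR2221138}, which establishes exactly this full-range local minimality for $\mathsf{A}_2$, $\mathsf{D}_4$, $\mathsf{E}_8$ and the Leech lattice; this deep analytic input is what the whole statement rests on. Combining it with the reduction above completes the proof, the case of $\mathsf{A}_2$ being covered identically.
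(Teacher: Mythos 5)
Your proof is correct and follows essentially the same route as the paper's: verify the $4$-design property for $\mathsf{D}_4$, $\mathsf{E}_8$ and the Leech lattice (via the automorphism group or harmonic theta series, as in \cite{MR2272103}), then apply the Corollary and import the full-range ($c>0$) statement from Sarnak and Str\"ombergsson \cite{MR2221138}. One refinement: the paper cites their Proposition~2 (the computations on pp.~138--139, showing that the lattice part of the Hessian --- that is, $G(y)$ --- is positive definite for every $y>0$) rather than their Theorem~1, and this is the sharper input your argument actually needs, since strict local minimality among lattices by itself only forces $G(y)\ge 0$, and a degenerate $G$ at some $y$ would break the second-order argument you reproduce from the Corollary (a function can be a strict local minimum along two complementary subspaces, with block-diagonal positive semidefinite Hessian, without being a local minimum); so the crux should be pinned to Proposition~2, exactly as the paper does.
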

\begin{proof}
 Applying the previous corollary, it is enough to check that these three lattices are locally universally optimal among lattices. But this is precisely Sarnak's and Str\"ombergsson's Proposition~2 in \cite{MR2221138}. 
The computations on pages 138--139 of their paper show that the $H$-part of the Hessian, $$H \mapsto \frac{\Tr (H^2)}{d(d+2)}\sum_{0\neq w \in \Lambda_0} c{\|w\|^2}\left(c{\|w\|^2}-\left( \frac{d}{2}+1\right) \right) e^{-c{\|w\|^2}}$$ is positive definite for any $c>0$.
\end{proof}

\section*{Acknowledgements}

We thank Giovanni Lazzarini for pointing out a mistake in a preliminary version of this paper. We also wish to thank the anonymous referee for various remarks and corrections which contributed to improve this work. This research collaboration was supported by the Universit\'e Bordeaux 1
and the NWO bezoekersbeurs 040.11.170.

%
%
%


\providecommand{\bysame}{\leavevmode\hbox to3em{\hrulefill}\thinspace}
\providecommand{\MR}{\relax\ifhmode\unskip\space\fi MR }
\providecommand{\MRhref}[2]{%
  \href{http://www.ams.org/mathscinet-getitem?mr=#1}{#2}
}
\providecommand{\href}[2]{#2}

\end{document}